\newtheorem{theorem}{Theorem}[section]
\newtheorem{corollary}[theorem]{Corollary}
\newtheorem{claim}[theorem]{Claim}
\newtheorem{question}[theorem]{Question}
\newtheorem{problem}[theorem]{Problem}
\theoremstyle{definition}
\newtheorem{definition}[theorem]{Definition}
\newtheorem{remark}[theorem]{Remark}
\newcommand{\cf}{\mathrm{cf}}
\newcommand{\dom}{\mathrm{dom}}
\newcommand{\bb}{\mathbb}
\newcommand{\otp}{\mathrm{otp}}
\newcommand{\acc}{\mathrm{acc}}
\title{A note on highly connected and well-connected Ramsey theory}
\author{Chris Lambie-Hanson}
\address{Institute of Mathematics of the Czech Academy of Sciences \\ 
\v{Z}itn\'{a} 25, Prague 1, 115 67, Czech Republic}
\email{lambiehanson@math.cas.cz}
\urladdr{http://math.cas.cz/lambiehanson}
\subjclass[2020]{03E02, 03E35, 05C63}
\keywords{partition relations, highly connected, well-connected, guessing models,
square principles}
\thanks{We thank Jeffrey Bergfalk and Michael Hru\v{s}\'{a}k for reading an early
draft and providing helpful comments and corrections.}
\begin{document}
\begin{abstract}
  We study a pair of weakenings of the classical partition relation
  $\nu \rightarrow (\mu)^2_\lambda$ recently introduced by Bergfalk-Hru\v{s}\'{a}k-Shelah
  and Bergfalk, respectively. Given an edge-coloring of the complete graph on
  $\nu$-many vertices, these weakenings assert the existence of monochromatic
  subgraphs exhibiting high degrees of connectedness rather than the existence of
  complete monochromatic subgraphs asserted by the classical relations. As a result,
  versions of these weakenings can consistently hold at accessible cardinals
  where their classical analogues would necessarily fail. We prove some
  complementary positive and negative results indicating the effect of large
  cardinals, forcing axioms, and square principles on these partition relations.
  We also prove a consistency result indicating that a non-trivial instance of
  the stronger of these two partition relations can hold at the continuum.
\end{abstract}
\maketitle

\section{Introduction}

In this paper, we study some natural variations of the classical partition
relation for pairs. Recalling the arrow notation of Erd\H{o}s and Rado,
given cardinals $\mu$, $\nu$, and $\lambda$, the expression
\[
  \nu \rightarrow (\mu)^2_\lambda
\]
denotes the assertion that, for every coloring $c:[\nu]^2 \rightarrow \lambda$,
there is a set $X \subseteq \nu$ of cardinality $\mu$
such that $c \restriction [X]^2$ is constant. This can be usefully interpreted
in the language of graph theory as asserting that for every edge-coloring of
the complete graph $K_\nu$ on $\nu$-many vertices using $\lambda$-many colors,
there is a monochromatic subgraph of $K_\nu$ isomorphic to $K_\mu$.
With this notation, the infinite Ramsey theorem for pairs can be expressed succinctly as
\[
  \aleph_0 \rightarrow (\aleph_0)^2_k
\]
for all $k < \omega$.

When one tries to generalize the infinite Ramsey theorem to uncountable cardinals
in the most straightforward way, by replacing each instance of `$\aleph_0$' in
the above expression by some fixed uncountable cardinal `$\nu$', one immediately
runs into a statement that can only hold at large cardinals, as the assertion
that $\nu$ is uncountable and $\nu \rightarrow (\nu)^2_2$ is equivalent to the
assertion that $\nu$ is weakly compact.

To achieve a consistent statement at accessible uncountable cardinals, then,
one must weaken the statement $\nu \rightarrow (\nu)^2_2$. One natural approach
is to increase the value of the cardinal on the left side of the expression.
This is precisely what was done by Erd\H{o}s and Rado in \cite{erdos_rado}.
A special case of what became known as the Erd\H{o}s-Rado theorem states that,
for every infinite cardinal $\kappa$, we have
\[
  (2^\kappa)^+ \rightarrow (\kappa^+)^2_\kappa.
\]
This result is sharp, in the sense that there are colorings witnessing the
negative relations $2^\kappa \not\rightarrow (3)^2_\kappa$ and
$2^\kappa \not\rightarrow (\kappa^+)^2_2$. (We refer the reader to
\cite[\S 7]{kanamori} for proofs and further discussion of these facts.)

Another approach is to weaken the meaning of the arrow notation, in particular
by replacing the requirement that the monochromatic subgraph witnessing the
partition relation be complete by a weaker but still nontrivial notion of
largeness. This is the approach taken by Bergfalk-Hru\v{s}\'{a}k-Shelah and
Bergfalk in \cite{highly_connected} and \cite{well_connected}, where they introduce
the notions of partition relations for \emph{highly connected} and \emph{well-connected}
subsets, respectively. It is these notions that provide the primary subject of
this paper; let us turn to their definitions, beginning with the partition relation for
\emph{highly connected} subsets.

Throughout the paper, if $G = (X, E)$ is a graph and $Y \subseteq X$, then we
let $G \setminus Y$ denote the graph $(X \setminus Y, ~ E \cap [X \setminus Y]^2)$.
We will also sometime write $|G|$ to mean $|X|$.

\begin{definition}
  Given a graph $G = (X, E)$ and a cardinal $\kappa$, we say that $G$ is
  \emph{$\kappa$-connected} if $G \setminus Y$ is
  connected for every $Y \in [X]^{<\kappa}$. We say that
  $G$ is \emph{highly connected} if it is $|G|$-connected.
\end{definition}

\begin{definition}(Bergfalk-Hru\v{s}\'{a}k-Shelah \cite{highly_connected})
  \label{hc_relation_def}
  Suppose that $\mu$, $\nu$, and $\lambda$ are cardinals. The partition
  relation $\nu \rightarrow_{hc} (\mu)^2_\lambda$ is the assertion that,
  for every coloring $c:[\nu]^2 \rightarrow \lambda$, there is an
  $X \in [\nu]^\mu$ and a highly connected subgraph $(X, E)$ of
  $(\nu, [\nu]^2)$ such that $c \restriction E$ is constant.
\end{definition}

As was noted in \cite{highly_connected}, a finite graph is highly connected
if and only if it is complete. As a result, if $\mu$ is finite, then the
relation $\nu \rightarrow_{hc} (\mu)^2_\lambda$ is simply the classical relation
$\nu \rightarrow (\mu)^2_\lambda$. In the context of infinite $\mu$, however,
the $\rightarrow_{hc}$ version of a relation can consistently hold in situations
where the classical version necessarily fails. For example, the following is
proven in \cite{highly_connected}.

\begin{theorem}[Bergfalk-Hru\v{s}\'{a}k-Shelah \cite{highly_connected}]
  It is consistent, relative to the consistency of a weakly compact cardinal,
  that
  \[
    2^{\aleph_1} \rightarrow_{hc} \left(2^{\aleph_1}\right)^2_{\aleph_0}.
  \]
\end{theorem}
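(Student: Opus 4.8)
The plan is to begin with a ground model $V \models \mathrm{GCH}$ containing a weakly compact cardinal $\kappa$, and to make $2^{\aleph_1} = \kappa$ as cheaply as possible, by forcing with $\mathbb{P} = \mathrm{Add}(\omega_1, \kappa)$, the poset of countable partial functions from $\kappa \times \omega_1$ into $2$. The cardinal arithmetic is routine: $\mathbb{P}$ is $\omega_1$-closed (so it adds no reals and preserves $\omega_1$); it has the $\aleph_2$-cc by the usual $\Delta$-system argument using $\mathrm{CH}$ (so it preserves all cardinals $\geq \aleph_2$, hence, together with the previous point, all cardinals); it has size $\kappa$; and under $\mathrm{GCH}$ there are only $\kappa$-many nice $\mathbb{P}$-names for subsets of $\omega_1$. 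Hence $V[G] \models 2^{\aleph_1} = \kappa$. In particular $\kappa$ is a regular cardinal that is no longer a strong limit, so it is not weakly compact and the classical relation $\kappa \rightarrow (\kappa)^2_2$ fails in $V[G]$; the assertion of the theorem is that the highly connected weakening nevertheless survives.

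To verify $\kappa \rightarrow_{hc} (\kappa)^2_{\aleph_0}$ in $V[G]$, fix a coloring $c : [\kappa]^2 \to \omega$ with a $\mathbb{P}$-name $\dot c$; by the $\kappa$-cc we may take $\dot c$ inside a transitive $M \models \mathrm{ZFC}^-$ of size $\kappa$ with $\mathbb{P}, \kappa \in M$ and ${}^{<\kappa}M \subseteq M$. Weak compactness supplies an elementary $j : M \to N$ with $N$ transitive and $\mathrm{crit}(j) = \kappa$. Now $j(\mathbb{P})$ factors as $\mathbb{P} \times \mathbb{P}_{\mathrm{tail}}$ with $j \restriction \mathbb{P}$ the identity inclusion, where $\mathbb{P}_{\mathrm{tail}} = \mathrm{Add}(\omega_1, [\kappa, j(\kappa)))^N$ is $\omega_1$-closed in $N[G]$. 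Transporting $j$ through $\mathbb{P}$ — lifting it via a $\mathbb{P}_{\mathrm{tail}}$-generic over $N[G]$, or (since $\mathbb{P}_{\mathrm{tail}}$ is merely $\omega_1$-closed, not $\kappa$-closed, while $N[G]$ has $\kappa$-many dense sets) working instead with names and a master condition — one obtains $\hat j : M[G] \to N[\hat G]$ extending $j$, and since $\mathrm{crit}(\hat j) = \kappa > \omega$ the coloring $d := \hat j(c)$ of $[j(\kappa)]^2$ satisfies $d \restriction [\kappa]^2 = c$.

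The extraction is where the weakening does its work: one aims not for a complete monochromatic subgraph (impossible, since classical Ramsey fails) but only for an abundance of short monochromatic paths. For $\alpha < \kappa$ put $i_\alpha := d(\{\alpha, \kappa\}) < \omega$; since $\kappa$ is regular and uncountable, fix $i^* < \omega$ with $A := \{\alpha < \kappa : i_\alpha = i^*\}$ unbounded in $\kappa$. For $\alpha, \beta \in A$ and any $\delta < \kappa$, the vertex $\kappa$ witnesses in $N[\hat G]$ that $\{\gamma < j(\kappa) : d(\{\alpha,\gamma\}) = i^* = d(\{\beta,\gamma\})\} \not\subseteq \delta$, so pulling back along $\hat j$ and using $d \restriction [\kappa]^2 = c$, the set $N^{i^*}_{\alpha\beta} := \{\gamma < \kappa : c(\{\alpha,\gamma\}) = i^* = c(\{\beta,\gamma\})\}$ is unbounded in $\kappa$ — a statement about $c$ alone, hence genuinely true in $V[G]$. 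From this abundance of common $i^*$-neighbors, together with a closing-off recursion of length $\kappa$ (fed at each step by common-neighbor facts of the kind just established), one produces, inside $V[G]$, a color $i^*$ and a set $X \in [\kappa]^\kappa$ such that $X \cap N^{i^*}_{\alpha\beta}$ is unbounded in $\kappa$ for all $\alpha, \beta \in X$. Then $(X, c^{-1}(i^*) \cap [X]^2)$ is $\kappa$-connected: after deleting any $W \in [X]^{<\kappa}$, any two surviving vertices still have a common $i^*$-neighbor in $X \setminus W$ and so are joined by a path of length $2$. This subgraph is highly connected and monochromatic of size $\kappa$ — though far from complete, as it must be — so $2^{\aleph_1} \rightarrow_{hc} (2^{\aleph_1})^2_{\aleph_0}$.

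The main obstacle lies at the interface of these two halves: $\mathbb{P}$ destroys the weak compactness of $\kappa$, so the embedding exists only in $V$ and must be pushed through $\mathbb{P}$, whose quotient is only $\omega_1$-closed; securing the lift, and in particular securing that the conclusions read off in $N[\hat G]$ — and the closed set $X$ built from them — genuinely belong to (or are forced over) $V[G]$ rather than to some outer model, is the technically demanding step, handled by the standard machinery for lifting weakly compact embeddings (names, master conditions) together with the closing-off argument. The conceptual reason the strategy works at all — and the reason the relation can hold even though $\kappa$ is no longer weakly compact — is that the embedding is only ever used to produce paths of length two, that is, high connectivity in place of completeness, which is precisely the weakened conclusion at hand.
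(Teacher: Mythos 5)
Your choice of model is the right one---this is exactly the Bergfalk--Hru\v{s}\'{a}k--Shelah model, adding weakly compact many Cohen subsets of $\omega_1$---but the engine of your argument has two genuine gaps. First, the lift. Since $j\restriction\mathbb{P}$ is the identity, $j``G = G$ and a master condition buys you nothing; what you actually need is a filter $H \subseteq \mathbb{P}_{\mathrm{tail}}$ that is \emph{generic over $N[G]$}, and $\mathbb{P}_{\mathrm{tail}}$ is only countably closed while $N[G]$ has $\kappa$-many dense subsets of it, so the usual diagonal construction of such an $H$ inside $V[G]$ dies at limit stages of cofinality in $[\omega_1,\kappa)$. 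Hence $\hat{j}$ exists only in a further extension $V[G][H]$. That by itself might be tolerable if everything you extracted were absolute, but the key objects are not: the color $i^*$ and the set $A=\{\alpha<\kappa \mid d(\alpha,\kappa)=i^*\}$ are defined from $\hat{j}$ and live in $V[G][H]$; only the individual statements ``$N^{i^*}_{\alpha,\beta}$ is unbounded'' for particular pairs transfer down to $V[G]$. So in $V[G]$ you never obtain an unbounded set all of whose pairs have unbounded common $i^*$-neighborhoods. This is precisely where weak compactness falls short of the strong compactness used in Section \ref{positive_section}: there the ultrafilter is a genuine $\theta$-complete ultrafilter in $V$, so the sets $X_\alpha$, the homogeneous set $X$, and their intersections are all measured by it and common neighbors can be found \emph{inside} $X$; the $M[G]$-ultrafilter derived from $\hat{j}$ measures only sets in $M[G]$, and neither $A$ nor $\langle i_\alpha \mid \alpha<\kappa\rangle$ belongs to $M[G]$.

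Second, even granting an unbounded $A \in V[G]$ with all pairs having unbounded common-neighbor sets, your closing-off recursion does not produce a highly connected graph: a vertex $\gamma$ adjoined to $X$ as a common neighbor of $\alpha,\beta \in A$ comes with no guarantees of its own---its only guaranteed edges go to $\alpha$ and $\beta$, so deleting those two vertices (a set of size $2 < \kappa$) can isolate $\gamma$---and you cannot keep the recursion inside $A$, since nothing ensures $N^{i^*}_{\alpha,\beta} \cap A$ is unbounded (that would require the derived filter to measure $A$, which it does not). The actual proof, of which the proof of Theorem \ref{positive_square_bracket_relation} in this paper is a close relative, avoids lifting altogether: working in $V$, fix for each pair a condition $q_{\alpha,\beta}$ deciding $\dot{c}(\alpha,\beta)$, apply the weak compactness of $\kappa$ to an auxiliary coloring recording the decided color together with the order type and pattern of $q_{\alpha,\beta}$, thin out using the two-dimensional $\Delta$-system lemma, and then define names $\dot{X}_0,\dot{X}_1$ for the vertex set, verifying by density arguments that generically any two surviving vertices are joined by a path of length at most four through vertices above any prescribed bound. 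If you want to salvage your outline, it has to take roughly that shape rather than the embedding-plus-closing-off route.
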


We now recall the partition relation for \emph{well-connected} subsets.

\begin{definition}(Bergfalk \cite{well_connected}) \label{wc_def}
  Suppose that $\mu$, $\nu$, and $\lambda$ are cardinals. Given a coloring
  $c:[\nu]^2 \rightarrow \lambda$ and a fixed color $i < \lambda$, we say
  that a subset $X \subseteq \nu$ is \emph{well-connected in color $i$}
  (with respect to $c$) if, for every $\alpha < \beta$ in $X$, there is a finite path
  $\langle \alpha_0, \ldots, \alpha_n \rangle$ (not necessarily contained
  in $X$) such that
  \begin{itemize}
    \item $\alpha_0 = \alpha$ and $\alpha_n = \beta$;
    \item $\alpha_k \geq \alpha$ for all $k \leq n$; and
    \item $c(\alpha_k, \alpha_{k+1}) = i$ for all $k < n$.
  \end{itemize}
  The partition relation $\nu \rightarrow_{wc} (\mu)^2_\lambda$ is
  the assertion that, for every coloring $c:[\nu]^2 \rightarrow \lambda$,
  there are $X \in [\nu]^\mu$ and $i < \lambda$ such that $X$ is well-connected
  in color $i$.
\end{definition}

As usual, $\nu \not\rightarrow_{hc} (\mu)^2_\lambda$ and $\nu \not\rightarrow_{wc}
(\mu)^2_\lambda$ denote the negations of the respective partition relations.

The $\rightarrow_{wc}$ relation is a clear weakening of the classical $\rightarrow$
relation and is in fact a weakening of $\rightarrow_{hc}$. Indeed, by \cite[Lemma
6]{well_connected}, given cardinals $\mu$, $\nu$, and $\lambda$, we have
\[
  \left(\nu \rightarrow (\mu)^2_\lambda \right) \Rightarrow
  \left(\nu \rightarrow_{hc} (\mu)^2_\lambda \right) \Rightarrow
  \left(\nu \rightarrow_{wc} (\mu)^2_\lambda \right).
\]
We have already seen that the left implication in the above statement can consistently
fail. The right implication can consistently fail as well. For example, the following
is proven in \cite{well_connected}.

\begin{theorem}[Bergfalk \cite{well_connected}]
  It is consistent, relative to the consistency of a weakly compact cardinal,
  that $\aleph_2 \rightarrow_{wc} (\aleph_2)^2_{\aleph_0}$ holds but
  $\aleph_2 \rightarrow_{hc} (\aleph_2)^2_{\aleph_0}$ fails.
\end{theorem}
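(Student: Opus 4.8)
The plan is to build the model in two steps over a ground model $V$ satisfying $\mathsf{GCH}$ in which $\kappa$ is weakly compact (both arrangeable by harmless preliminary forcing if necessary). Step one: let $G$ be generic for the L\'evy collapse $\mathbb{P}=\mathrm{Col}(\aleph_1,{<}\kappa)$, so that $\kappa=\aleph_2$ in $V[G]$, with $2^{\aleph_0}=\aleph_1$, $2^{\aleph_1}=\aleph_2$, the tree property at $\aleph_2$, and — by the weakly compact embedding argument of Bergfalk--Hru\v{s}\'{a}k--Shelah underlying the theorem quoted above, which applies here since $2^{\aleph_1}=\aleph_2$ — the relation $\aleph_2\rightarrow_{wc}(\aleph_2)^2_{\aleph_0}$ (indeed $\aleph_2\rightarrow_{hc}(\aleph_2)^2_{\aleph_0}$). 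Step two: over $V[G]$, let $H$ be generic for the standard poset $\mathbb{S}$ adding a $\square(\aleph_2)$-sequence, whose conditions are coherent sequences $\langle C_\alpha:\alpha\le\gamma\rangle$ of length $\gamma<\aleph_2$ ordered by end-extension. Since $\mathbb{S}$ is ${<}\aleph_2$-strategically closed and, by $\mathsf{GCH}$, of size $\aleph_2$, it preserves all cardinals and cofinalities and keeps $2^{\aleph_1}=\aleph_2$; and $\square(\aleph_2)$ holds in $V[G][H]$. The two steps pull against each other: the collapse delivers the positive relation, and adding $\square(\aleph_2)$ should kill the highly connected version without killing the well-connected one.

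For the negative half I would show that $\square(\aleph_2)$ outright implies $\aleph_2\not\rightarrow_{hc}(\aleph_2)^2_{\aleph_0}$. Given a $\square(\aleph_2)$-sequence $\vec{C}$ in $V[G][H]$, the method of minimal walks along $\vec{C}$ produces a coloring $c:[\aleph_2]^2\to\omega$ — e.g.\ one recording the length or the last step of the walk — such that for every $X\in[\aleph_2]^{\aleph_2}$ and every colour $i<\omega$ the colour-$i$ subgraph induced on $X$ can be disconnected by deleting fewer than $\aleph_2$ vertices; the coherent threads of $\vec{C}$ are exactly what obstructs a large highly connected monochromatic subgraph. Hence $V[G][H]\models\aleph_2\not\rightarrow_{hc}(\aleph_2)^2_{\aleph_0}$.

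The main obstacle is the positive half in $V[G][H]$: that $\mathbb{S}$ preserves $\aleph_2\rightarrow_{wc}(\aleph_2)^2_{\aleph_0}$. This cannot be done by simply re-running the weakly compact argument, for two related reasons. First, the generic $\square(\aleph_2)$-sequence witnesses that $\aleph_2$ no longer reflects $\Pi^1_1$ statements in $V[G][H]$, and accordingly the $V$-side embedding $j:M\to N$ used at step one does not lift through $\mathbb{S}$ — a lift would force $\vec{C}$ to acquire a thread, which $\square(\aleph_2)$ forbids. Second, a coloring $c:[\aleph_2]^2\to\omega$ appearing in $V[G][H]$ is not pinned down by any descending ${<}\aleph_2$-sequence of $\mathbb{S}$-conditions, since $\mathrm{cf}(\aleph_2)=\aleph_2$ exceeds the closure of $\mathbb{S}$. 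What I would do instead is isolate a consequence $\Phi$ of the weak compactness of $\kappa$ that (i) holds of $\aleph_2$ in $V[G]$, (ii) already implies $\aleph_2\rightarrow_{wc}(\aleph_2)^2_{\aleph_0}$, and (iii) is preserved by ${<}\aleph_2$-strategically closed forcing of size $\aleph_2$, hence is compatible with $\square(\aleph_2)$. The natural route to $\Phi$ exploits that well-connectedness of a pair $\{\alpha,\beta\}$ in colour $i$ is witnessed by a finite path and so is decided by $c\restriction[\gamma]^2$ for a single $\gamma<\aleph_2$; since $\mathbb{S}$ is ${<}\aleph_2$-distributive each such $c\restriction[\gamma]^2$ already lies in $V[G]$, so $c$ traces a cofinal branch through the $\aleph_2$-tree $T\in V[G]$ of its potential initial segments, and one wants a reflection/branch argument — drawing on the residual strength of $\kappa$, or on the tree property in $V[G]$ together with a density analysis of $\mathbb{P}*\dot{\mathbb{S}}$ — showing that every cofinal branch of $T$, and in particular $c$, carries a monochromatically well-connected set of size $\aleph_2$. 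Pinning down $\Phi$ and making this last argument go through is where essentially all the work lies; the negative half and the cardinal-arithmetic bookkeeping are routine.
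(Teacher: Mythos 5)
Your plan has a fatal, structural flaw: forcing a $\square(\aleph_2)$-sequence in step two does not merely kill the highly connected relation -- it provably kills the well-connected one as well. Indeed, Theorem \ref{unbounded_order_negative_relation} of this very paper (a refinement of Bergfalk's \cite[Lemma 9]{well_connected}), applied with $\mu = \aleph_2$ and $\lambda = \aleph_0$, shows that $\square(\aleph_2)$ implies $\aleph_2 \not\rightarrow_{wc} [\aleph_2]^2_{\aleph_0, <\aleph_0}$, and hence in particular $\aleph_2 \not\rightarrow_{wc} (\aleph_2)^2_{\aleph_0}$: the coloring recording the least index $i$ with $\alpha \in \acc(C_{\beta,i})$ admits no well-connected set of size $\aleph_2$ in any single color. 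So the principle $\Phi$ you hope to isolate -- one implying $\aleph_2 \rightarrow_{wc} (\aleph_2)^2_{\aleph_0}$ yet surviving the $\square(\aleph_2)$-forcing -- cannot exist, and the ``main obstacle'' you flag is not a technical difficulty but an outright impossibility. A secondary over-reach: your parenthetical claim that the L\'{e}vy collapse of a weakly compact already yields $\aleph_2 \rightarrow_{hc} (\aleph_2)^2_{\aleph_0}$ is unjustified; the Bergfalk--Hru\v{s}\'{a}k--Shelah argument produces the relation at $2^{\aleph_1} = \kappa$ while $\kappa$ remains far above $\aleph_2$, and whether $\aleph_2 \rightarrow_{hc} (\aleph_2)^2_{\aleph_0}$ is consistent at all is precisely the open question restated in Section \ref{questions_sec}.

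The actual separation, due to Bergfalk \cite{well_connected}, works by cardinal arithmetic rather than by an anticompactness principle: the model is Mitchell's model for the tree property at $\aleph_2$ \cite{mitchell}, in which $2^{\aleph_0} = \aleph_2$. There the failure of $\aleph_2 \rightarrow_{hc} (\aleph_2)^2_{\aleph_0}$ costs nothing, since $\mu \not\rightarrow_{hc} (\mu)^2_\lambda$ holds in $\mathrm{ZFC}$ whenever $\mu \leq 2^\lambda$ (\cite[Proposition 8]{highly_connected}, quoted at the start of Section \ref{forcing_sec}); no walks coloring or square sequence is needed, and crucially this mechanism does not disturb the well-connected relation. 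The positive half, $\aleph_2 \rightarrow_{wc} (\aleph_2)^2_{\aleph_0}$, is then established by exploiting the compactness that Mitchell forcing preserves from the weakly compact cardinal, in the spirit of the guessing-model/branch argument of Section \ref{positive_section}. The moral your proposal misses is that $hc$ and $wc$ are separated by making the continuum large, not by adding a square sequence, because square principles are exactly the tool this paper uses to refute $wc$ itself.
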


The model for the above theorem is Mitchell's model for the tree property at $\aleph_2$
from \cite{mitchell}. The question of the consistency of $\aleph_2 \rightarrow_{hc}
(\aleph_2)^2_{\aleph_0}$ was asked in \cite{highly_connected} and remains open.

In this paper, we prove some further results about the relations $\rightarrow_{hc}$
and $\rightarrow_{wc}$, some of them addressing questions from \cite{highly_connected}
and \cite{well_connected}. We provide here a brief outline of the remainder of the
paper. In Section \ref{positive_section}, we isolate some instances in which
the relations $\nu \rightarrow_{hc} (\nu)^2_\lambda$ and $\nu \rightarrow_{wc}
(\nu)^2_\lambda$ necessarily hold due either to the existence of certain large
cardinals or to certain forcing axioms holding. In particular, we show that, if $\mathsf{PFA}$
holds, then $\nu \rightarrow_{wc} (\nu)^2_{\aleph_0}$ holds for every
regular cardinal $\nu \geq \aleph_2$. This is optimal in the sense that
it was proven in \cite{well_connected} that $\aleph_1 \not\rightarrow_{wc}
(3)^2_{\aleph_0}$ In Section \ref{negative_section}, we prove
some complementary negative results, in particular indicating that certain
square principles imply the failure of instances of $\nu \rightarrow_{wc} (\nu)^2_\lambda$.
In the process, we introduce natural square-bracket versions of the partition
relations for highly connected and well-connected subsets. In Section
\ref{forcing_sec}, we prove the consistency, relative to the consistency of
a weakly compact cardinal, of the partition relation
$2^{\aleph_0} \rightarrow_{hc} [2^{\aleph_0}]^2_{\aleph_0, 2}$.
This is sharp due to the fact that, as shown in \cite{highly_connected}, the
negative relation $2^{\aleph_0} \not\rightarrow_{hc} (2^{\aleph_0})^2_{\aleph_0}$ is provable in
$\mathrm{ZFC}$. Finally, in Section \ref{questions_sec}, we point out some recent work 
solving open questions stated in the first draft of this paper.

\subsection*{Notation and conventions} If $X$ is a set and $\mu$ is a cardinal,
then $[X]^\mu = \{Y \subseteq X \mid |Y| = \mu\}$. A \emph{graph} is a pair
$G = (X, E)$, where $X$ is a set and $E \subseteq [X]^2$. Given a function $c$
with domain $[X]^2$, we often slightly abuse notation and write, for instance,
$c(a,b)$ instead of $c(\{a,b\})$. While elements of $[X]^2$ are unordered pairs,
we will sometimes care about the relative order of the elements of such a pair.
In particular, if $X$ is a set of ordinals, then we will use the notation
$(\alpha, \beta) \in [X]^2$ to indicate the conjunction of the two statements
$\{\alpha, \beta\} \in [X]^2$ and $\alpha < \beta$.

A \emph{path} in a graph $G = (X, E)$ is a finite sequence $\langle x_0, \ldots,
x_n \rangle$ of pairwise distinct elements of $X$ such that
$\{x_i, x_{i + 1}\} \in E$ for all $i < n$. We say that $G$ is \emph{connected}
if, for all distinct $x, y \in X$, there is a path $\langle x_0, \ldots, x_n \rangle$
in $G$ such that $x_0 = x$ and $x_n = y$.

If $x$ is a set of ordinals, then the set of \emph{accumulation points} of $x$,
denoted by $\acc(x)$, is defined to be $\{\alpha \in x \mid \sup(x \cap \alpha) = \alpha\}$.
In particular, if $\beta$ is an ordinal, then $\acc(\beta)$ is the set of limit
ordinals below $\beta$.

\section{Positive results} \label{positive_section}

In \cite{well_connected}, Bergfalk asked about conditions under which the
relations $\mu^+ \rightarrow_{hc} (\mu)^2_{\cf(\mu)}$ and $\mu^+
\rightarrow_{wc} (\mu)^2_{\cf(\mu)}$ hold, where $\mu$ is a singular cardinal.
In this section, we provide two scenarios in which such relations
(and more) hold. The first scenario simply involves the presence of large
cardinals and produces instances of the the highly connected partition relation.
The second involves the existence of guessing models and produces instances of
the well-connected partition relation. Complementary negative results appear in
the next section.

\begin{definition}
  Suppose that $\theta \leq \kappa$ are regular, uncountable cardinals.
  $\kappa$ is \emph{$\theta$-strongly compact} if, for every set $X$ and every
  $\kappa$-complete filter $F$ over $X$, $F$ can be extended to a $\theta$-complete
  ultrafilter over $X$. $\kappa$ is \emph{strongly compact} if it is $\kappa$-strongly
  compact.
\end{definition}

\begin{theorem}
  Suppose that $\theta \leq \kappa$ are regular uncountable cardinals
  and $\kappa$ is $\theta$-strongly compact. Suppose moreover that
  $\lambda$ and $\nu$ are cardinals with $\lambda < \theta$ and
  $\cf(\nu) \geq \kappa$. Then
  \[
    \nu \rightarrow_{hc} (\nu)^2_\lambda.
  \]
\end{theorem}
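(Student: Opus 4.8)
The plan is to use a $\theta$-complete ultrafilter to "freeze" a single color and then read off a highly connected monochromatic subgraph directly. Fix a coloring $c:[\nu]^2 \rightarrow \lambda$. Since $\cf(\nu) \geq \kappa$, the club filter on $\nu$ (or more simply the filter generated by the tails $\{[\alpha, \nu) \mid \alpha < \nu\}$) is $\kappa$-complete; by $\theta$-strong compactness it extends to a $\theta$-complete ultrafilter $U$ on $\nu$. For each $\alpha < \nu$, the function $\beta \mapsto c(\alpha, \beta)$ is defined on the $U$-large set $(\alpha, \nu)$ and takes fewer than $\theta$ values, so by $\theta$-completeness of $U$ there is a unique color $i_\alpha < \lambda$ such that $A_\alpha := \{\beta > \alpha \mid c(\alpha, \beta) = i_\alpha\} \in U$. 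Now the map $\alpha \mapsto i_\alpha$ is a coloring of $\nu$ with $\lambda < \theta$ colors, so again by $\theta$-completeness there is a single color $i^* < \lambda$ and a set $X := \{\alpha < \nu \mid i_\alpha = i^*\} \in U$; in particular $|X| = \nu$.

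Next I would check that the obvious monochromatic graph on $X$ is highly connected. Let $E := \{\{\alpha, \beta\} \in [X]^2 \mid c(\alpha, \beta) = i^*\}$, so $c \restriction E$ is constantly $i^*$ by construction. Given any $Y \in [X]^{<\nu}$, I must show $(X, E) \setminus Y = (X \setminus Y, E \cap [X \setminus Y]^2)$ is connected. The key point is that for any two vertices $\alpha, \beta \in X \setminus Y$ with $\alpha < \beta$, both $A_\alpha$ and $A_\beta$ are in $U$, hence so is $A_\alpha \cap A_\beta$; since $|Y| < \nu = |A_\alpha \cap A_\beta|$ (note $U$ contains no sets of size $<\nu$, as it refines the tail filter and $\cf(\nu) \geq \kappa$ forces every $U$-large set to be cofinal in $\nu$, hence of size $\nu$), we may pick some $\gamma \in (A_\alpha \cap A_\beta \cap X) \setminus Y$ with $\gamma > \beta$. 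Then $c(\alpha, \gamma) = i_\alpha = i^* = i_\beta = c(\beta, \gamma)$, so $\langle \alpha, \gamma, \beta \rangle$ is a path in $(X, E) \setminus Y$. Thus $(X, E) \setminus Y$ is connected — in fact of diameter at most $2$ — which gives that $(X, E)$ is $\nu$-connected, i.e. highly connected since $|X| = \nu$.

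The one subtlety worth being careful about is the size of $U$-large sets: I am implicitly using that $\cf(\nu) \geq \kappa > \aleph_0$ together with $U$ refining the tail filter to conclude that every $A \in U$ is unbounded in $\nu$, hence $|A| = \nu$ (using regularity of $\cf(\nu)$ one argues an $A$ of size $<\nu$ could not be cofinal, but one should phrase this cleanly, e.g. if $\nu$ is regular then $|A| = \nu$ is immediate, and if $\nu$ is singular then $\cf(\nu) \geq \kappa$ handles it since $A$ cofinal in $\nu$ has order type of cofinality $\geq \kappa$, so $|A| = \nu$). This is the only place where the hypothesis $\cf(\nu) \geq \kappa$ is used beyond guaranteeing $\kappa$-completeness of the tail filter, and it is what makes the diameter-$2$ argument go through after removing any set of size $<\nu$. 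Everything else is a routine application of the defining property of a $\theta$-complete ultrafilter, exactly parallel to the classical proof that a measurable cardinal is Ramsey, but terminating one step earlier since we only need connectedness rather than a complete homogeneous set.
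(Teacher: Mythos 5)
Your overall strategy---extend a suitable $\kappa$-complete filter on $\nu$ to a $\theta$-complete ultrafilter $U$, stabilize first the colors $i_\alpha$ and then a single color $i^*$ on a $U$-large set $X$, and get diameter-$2$ connectivity after deleting any $Y$ of size $<\nu$ by intersecting $A_\alpha \cap A_\beta \cap X$---is exactly the paper's argument. However, there is a genuine gap at the very step you flag as the ``one subtlety'': with your choice of filter (the tail filter), membership in $U$ only guarantees \emph{unboundedness} in $\nu$, not cardinality $\nu$. Your argument for singular $\nu$ is incorrect: a cofinal subset of a singular cardinal $\nu$ need only have cardinality $\cf(\nu)$, and under the hypothesis $\cf(\nu) \geq \kappa$ this can still be far smaller than $\nu$ (e.g.\ $\nu$ singular of cofinality exactly $\kappa$); having order type whose cofinality is $\geq \kappa$ does not force cardinality $\nu$. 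Moreover, nothing in your construction rules this out: if $B \subseteq \nu$ is cofinal of size $\cf(\nu)$, then $B$ together with the tails generates a $\kappa$-complete filter, so $\theta$-strong compactness could hand you an ultrafilter concentrating on $B$. In that case your $X \in U$ may have size $\cf(\nu) < \nu$, so it is not a member of $[\nu]^\nu$, and the inequality $|Y| < |A_\alpha \cap A_\beta|$ used in the connectivity argument is also unavailable. Since the theorem is stated for arbitrary $\nu$ with $\cf(\nu) \geq \kappa$, the singular case cannot be dismissed.

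The repair is small and is exactly what the paper does: extend instead the filter $F = \{A \subseteq \nu \mid |\nu \setminus A| < \nu\}$ of sets with small complement. This $F$ is $\kappa$-complete precisely because $\cf(\nu) \geq \kappa$ (a union of fewer than $\kappa$ sets, each of size $<\nu$, has size $<\nu$), and every member of any ultrafilter extending $F$ has cardinality exactly $\nu$. With that single substitution, the rest of your proof goes through verbatim (for regular $\nu$ your version is already fine, since unbounded subsets of a regular cardinal have full size).
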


\begin{proof}
  Fix a coloring $c : [\nu]^2 \rightarrow \lambda$. We must find a color
  $i < \lambda$ and a set $X \in [\nu]^\nu$ such that the graph
  $(X, c^{-1}(i) \cap [X]^2)$ is highly connected.

  Using the fact that $\kappa$ is $\theta$-strongly compact, let $U$ be a
  $\theta$-complete ultrafilter over $\nu$ extending the $\kappa$-complete
  filter
  \[
    F = \{X \subseteq \nu \mid |\nu \setminus X| < \nu \}.
  \]
  In particular, every element of $U$ has cardinality $\nu$. Now, for each
  $\alpha < \nu$, use the $\theta$-completeness of $U$ to find a color
  $i_\alpha < \lambda$ and a set $X_\alpha \subseteq \nu \setminus (\alpha + 1)$
  such that $X_\alpha \in U$ and $c(\alpha, \beta) = i_\alpha$ for all
  $\beta \in X_\alpha$. Now use the $\theta$-completeness of $U$ again to find
  a fixed color $i < \lambda$ and a set $X \in U$ such that $i_\alpha = i$ for
  all $\alpha \in X$.

  Let $G = (X, c^{-1}(i) \cap [X]^2)$. We claim that $X$ is highly connected.
  To this end, let $Y \in [X]^{<\nu}$, and fix $\alpha < \beta$ in
  $X \setminus Y$. To show that $\alpha$ and $\beta$ are connected in $G \setminus Y$, find
  \[
    \gamma \in (X \cap X_\alpha \cap X_\beta) \setminus Y,
  \]
  and note that $c(\alpha, \gamma) = c(\beta, \gamma) = i$. Then
  $\langle \alpha, \gamma, \beta \rangle$ is a path from $\alpha$ to $\beta$
  in $G \setminus Y$.
\end{proof}

\begin{corollary}
  Suppose that $\mu$ is a singular limit of strongly compact cardinals.
  Then
  \[
    \mu^+ \rightarrow_{hc} (\mu^+)^2_\lambda
  \]
  for all $\lambda < \mu$.
\end{corollary}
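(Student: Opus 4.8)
The plan is to obtain this as a direct consequence of the preceding Theorem, the only work being the selection of an appropriate strongly compact cardinal lying strictly between $\lambda$ and $\mu$. So fix a cardinal $\lambda < \mu$. Since $\mu$ is a singular limit of strongly compact cardinals, the strongly compact cardinals below $\mu$ are unbounded in $\mu$, and hence I can choose a strongly compact cardinal $\kappa$ with $\lambda < \kappa < \mu$.

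Next I would check that the hypotheses of the Theorem hold with the parameter choices $\theta = \kappa$, with $\kappa$ as above, and $\nu = \mu^+$. First, $\kappa$ being strongly compact means precisely that it is $\kappa$-strongly compact, so with $\theta = \kappa$ we have that $\theta \leq \kappa$ are regular uncountable cardinals and $\kappa$ is $\theta$-strongly compact. Second, $\lambda < \kappa = \theta$ by the choice of $\kappa$. Third, $\cf(\mu^+) = \mu^+$ because $\mu^+$ is a successor cardinal, and $\mu^+ > \mu > \kappa$, so $\cf(\mu^+) \geq \kappa$. All the hypotheses are therefore satisfied.

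Applying the Theorem to these parameters yields $\mu^+ \rightarrow_{hc} (\mu^+)^2_\lambda$, which is exactly the desired conclusion; since $\lambda < \mu$ was arbitrary, we are done. There is no real obstacle here: the only genuine choice is that of the witnessing strongly compact cardinal $\kappa \in (\lambda, \mu)$, whose existence is guaranteed by the assumption that $\mu$ is a singular \emph{limit} of strongly compact cardinals, and the remainder is a routine verification that the Theorem's cardinal-arithmetic hypotheses are met.
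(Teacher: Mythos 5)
Your proof is correct and is exactly the intended derivation: choose a strongly compact $\kappa$ with $\lambda < \kappa < \mu$, set $\theta = \kappa$ and $\nu = \mu^+$, and apply the theorem, noting $\cf(\mu^+) = \mu^+ \geq \kappa$. The paper states the corollary as an immediate consequence of the theorem in just this way, so there is nothing to add.
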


Recall the following definition, which comes from \cite{viale_guessing_models}
and is a generalization of a notion from \cite{viale_weiss}.

\begin{definition}
  Suppose that $\kappa < \chi$ are regular uncountable cardinals,
  $M \prec (H(\chi), \in)$, and $\kappa \leq |M| < \chi$.
  \begin{enumerate}
    \item Suppose that $x \in M$ and $d \subseteq x$.
    \begin{enumerate}
      \item We say that $d$ is \emph{$(\kappa, M)$-approximated} if $d \cap z \in M$
      for every $z \in M \cap \mathcal{P}_\kappa(H(\chi))$.
      \item We say that $d$ is \emph{$M$-guessed} if there is $e \in M$ such that
      $e \cap M = d \cap M$.
    \end{enumerate}
    \item $M$ is \emph{$\kappa$-guessing} if every $(\kappa, M)$-approximated
    set is $M$-guessed.
  \end{enumerate}
\end{definition}

Given an infinite regular cardinal $\theta$, let $(T_\theta)$ be
the statement asserting that there are arbitrarily large regular cardinals
$\chi$ such that the set
\[
  \{M \prec (H(\chi), \in) \mid |M| = \theta^+, ~ {^{<\theta} M} \subseteq M,
  \text{ and } M \text{ is } \theta^+\text{-guessing}\}
\]
is stationary in $\mathcal{P}_{\theta^{++}}(H(\chi))$.

It is proven by Viale and Wei\ss\ in \cite{viale_weiss}
that the Proper Forcing Axiom
($\mathsf{PFA}$) implies $(T_{\aleph_0})$. Trang, in \cite{trang_guessing_models},
proves the consistency of $(T_{\aleph_1})$, assuming the consistency of a
supercompact cardinal. The proof uses a Mitchell-type forcing construction and
is easily modified to show that, if $\theta$ is a regular cardinal and there
is a supercompact cardinal above $\theta$, then there is a $\theta$-closed
forcing extension in which $(T_{\theta^{+}})$ holds.

\begin{theorem}
  Suppose that $\theta$ is a regular cardinal and $(T_\theta)$
  holds. Then, for every regular cardinal $\nu > \theta^+$ and every
  $\lambda \leq \theta$, we have
  \[
    \nu \rightarrow_{wc} (\nu)^2_\lambda.
  \]
\end{theorem}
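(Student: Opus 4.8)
The plan is to follow the strategy behind the first theorem of this section, using the guessing models supplied by $(T_\theta)$ in place of the $\theta$-complete ultrafilter that is no longer available. Fix a coloring $c : [\nu]^2 \rightarrow \lambda$, and for $i < \lambda$ and $\alpha < \nu$ write $R^i_\alpha := \{\gamma < \nu : \alpha < \gamma \text{ and } c(\alpha, \gamma) = i\}$. I will construct a strictly increasing, continuous sequence $\langle x_\xi : \xi < \nu \rangle$ of ordinals below $\nu$ together with a cofinal set $E \subseteq \nu$ such that for each $\xi \in E$ there are a color $i_\xi < \lambda$ and an ordinal $\gamma_\xi$ with $x_\xi < \gamma_\xi < \nu$ and $\gamma_\xi \in \bigcap_{\eta < \xi} R^{i_\xi}_{x_\eta}$; that is, $\gamma_\xi$ is a single common $i_\xi$-neighbor of $\{x_\eta : \eta < \xi\}$ lying above $x_\xi$. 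Granting this, put $X := \{x_\xi : \xi < \nu\}$, an element of $[\nu]^\nu$. Since $\nu$ is regular and $\lambda \leq \theta < \nu$, the map $\xi \mapsto i_\xi$ is constant, with value $i^*$ say, on a cofinal $E^* \subseteq E$. Then $X$ is well-connected in color $i^*$: given $\alpha = x_{\xi_0} < x_{\xi_1} = \beta$ in $X$, choose $\xi \in E^*$ with $\xi > \xi_1$; since $\xi_0, \xi_1 < \xi$ we get $c(\alpha, \gamma_\xi) = c(\beta, \gamma_\xi) = i^*$, and as $\gamma_\xi > x_\xi > \beta > \alpha$ the sequence $\langle \alpha, \gamma_\xi, \beta \rangle$ is an $i^*$-colored path with every vertex $\geq \alpha$, which is exactly what Definition \ref{wc_def} requires for the pair $(\alpha, \beta)$. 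Note that this reduction lets the color vary during the construction and pins it down only at the end.

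To build the sequence, fix a regular cardinal $\chi$ large enough that $c \in H(\chi)$ and, by $(T_\theta)$, that the collection $\mathcal{G}$ of $M \prec (H(\chi), \in)$ with $|M| = \theta^+$, ${}^{<\theta}M \subseteq M$, and $M$ being $\theta^+$-guessing is stationary in $\mathcal{P}_{\theta^{++}}(H(\chi))$. A routine reflection argument yields $N \prec (H(\chi), \in)$ with $c, \mathcal{G} \in N$, $|N| = \nu$, $\nu + 1 \subseteq N$, and $\{M \in \mathcal{G} : M \subseteq N\}$ stationary in $\mathcal{P}_{\theta^{++}}(N)$. Take a continuous $\in$-increasing chain $\langle M_\xi : \xi < \nu \rangle$ of elementary submodels of $N$ with union $N$, $c \in M_0$, $|M_\xi| = \theta^+$, ${}^{<\theta}M_\xi \subseteq M_\xi$, and such that $\langle M_\eta : \eta < \xi \rangle \in M_{\xi + 1}$ and $M_{\xi + 1}$ contains an ordinal strictly above $\sup(M_\xi \cap \nu)$ for every $\xi$; we may also ensure $\cf(\sup(M_\xi \cap \nu)) = \theta^+$. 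Set $x_\xi := \sup(M_\xi \cap \nu)$: these are strictly increasing and continuous, $\{x_\eta : \eta < \xi\}$ is cofinal in $x_\xi$ at limits, and for every $\eta < \xi$ the enumeration $\langle x_{\eta'} : \eta' \leq \eta \rangle$ and each $R^i_{x_\eta}$ ($i < \lambda$) lie in $M_\xi$. Because $\{M_\xi : \xi \in C\}$ is a club in $\mathcal{P}_{\theta^{++}}(N)$ for every club $C \subseteq \nu$, the set $E := \{\xi < \nu : M_\xi \in \mathcal{G}\}$ is stationary, hence cofinal, in $\nu$. So it remains, for each $\xi \in E$, to find $i_\xi$ and $\gamma_\xi$ as in the previous paragraph.

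This last step is the heart of the argument and the only place the guessing property is used. For $\xi \in E$, the partial intersections $\bigcap_{\eta < \eta_0} R^i_{x_\eta}$ for $\eta_0 < \xi$ and $i < \lambda$ are $M_\xi$-definable, whereas the full intersections $\bigcap_{\eta < \xi} R^i_{x_\eta}$ --- whose nonemptiness for some $i$ is precisely what we need --- may well be empty for every $i$, this being the phenomenon responsible for the failure of $\nu \rightarrow (\nu)^2_\lambda$ at non-weakly-compact $\nu$. The idea is to recover a nonempty such intersection by applying the $\theta^+$-guessing of $M_\xi$ to a suitably engineered $(\theta^+, M_\xi)$-approximated subset $d$ of $\nu$ encoding the coherence of the partial intersections along $\langle x_\eta : \eta < \xi \rangle$; its $M_\xi$-guess $e \in M_\xi$ is then transferred, by elementarity of $M_\xi$, into an honest witness $\gamma_\xi < \nu$ together with its color $i_\xi$, after which the first paragraph completes the proof. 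The hard part will be identifying $d$: the obvious candidates --- the $i$-colored ``left neighborhood'' $\{\alpha < x_\xi : c(\alpha, x_\xi) = i\}$ of the external point $x_\xi$, or the set $\{x_\eta : \eta < \xi\}$ itself --- are in general \emph{not} $(\theta^+, M_\xi)$-approximated, since $M_\xi$ is only closed under sequences of length $< \theta$, so a restriction of such a set to an element of $M_\xi$ may have size $\theta$ and fail to lie in $M_\xi$; thus $d$ must be constructed with care, in the spirit of the standard extraction of cofinal branches through trees from guessing models. The reflection step producing $N$ and the verification that $E$ is stationary are routine and would be carried out in full.
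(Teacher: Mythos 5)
You never actually carry out the step that the whole argument hinges on. Your plan reduces the theorem to the following: along a continuous increasing sequence $\langle x_\eta : \eta < \xi \rangle$ (whose range is a club in $x_\xi$), for cofinally many $\xi$ there must be a single ordinal $\gamma_\xi > x_\xi$ and a single color $i_\xi$ with $c(x_\eta, \gamma_\xi) = i_\xi$ for \emph{all} $\eta < \xi$. This is a monochromatic star over an arbitrarily long initial segment, i.e.\ exactly the kind of ``common neighbor'' that the $\theta$-complete ultrafilter produces in the strongly compact theorem earlier in the section; it is much stronger than well-connectedness of the final set, and there is no reason a $\theta^+$-guessing model should deliver it. Indeed, a necessary consequence of your intermediate goal is that for cofinally many $\delta < \nu$ some vertex $\gamma > \delta$ has a single color class containing a club in $\delta$, a structural property of the coloring that the hypotheses do not obviously provide for an adversarial $c$. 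You acknowledge that the obvious candidates for the $(\theta^+, M_\xi)$-approximated set $d$ (the left $i$-neighborhood of an external point, or $\{x_\eta : \eta < \xi\}$ itself) are not approximated, and you defer the construction of a working $d$ as ``the hard part''; but that deferred construction is precisely the content of the proof, so as written the argument has a genuine gap at its core, and the gap sits inside a reduction that appears to aim at too strong a target.

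What is missing is the device that makes a guessing model usable here: the relations $<_i$, where $\alpha <_i \beta$ iff $\{\alpha, \beta\}$ is well-connected in color $i$, which are \emph{tree orderings} on $\nu$. The paper's proof uses a single guessing model $M$ (no chain, no reflection to a model of size $\nu$, no recursive construction): one first shows $\cf(\sup(M \cap \nu)) = \theta^+$ (itself an application of guessing, not something you can simply ``ensure'' for the models in your chain), then uses $\theta^+ > \lambda$ to find one color $i$ and an unbounded $d_0 \subseteq M \cap \nu$ all joined to $\nu_M = \sup(M \cap \nu)$ in color $i$, and then takes $d$ to be the $<_i$-downward closure of $d_0$. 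This $d$ \emph{is} $(\theta^+, M)$-approximated, because for $z \in M \cap \mathcal{P}_{\theta^+}(H(\chi))$ one can pick $\beta \in d_0$ above $z$ and, since $<_i$ is a tree order, $d \cap z = \{\alpha \in z \mid \alpha <_i \beta\}$, a set definable from parameters in $M$. The guessed set $e \in M$ with $e \cap M = d \cap M$ is then, by elementarity, cofinal in $\nu$ and linearly ordered by $<_i$, hence well-connected in color $i$ and of size $\nu$ by regularity. Note that this produces a $<_i$-chain, where consecutive points are linked by finite paths with varying intermediate vertices, rather than the star configuration your reduction demands; replacing ``common neighbor of all earlier points'' by the transitive, tree-like relation ``well-connected pair'' is exactly the idea your proposal lacks.
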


\begin{proof}
  Fix a regular cardinal $\nu > \theta^+$, a cardinal $\lambda \leq \theta$, and
  a coloring $c:[\nu]^2 \rightarrow \lambda$. We must find a color $i < \lambda$
  and a set $X \in [\nu]^\nu$ such that $X$ is well-connected in color $i$.
  Given a color $i < \lambda$ and $\alpha < \beta < \nu$, we say that
  $\alpha <_i \beta$ if $\{\alpha, \beta\}$ is well-connected in color $i$.
  It is easily verified that, for every $i < \lambda$, $(\nu, \leq_i)$ is a
  tree, i.e., it is a partial order and, for all $\beta \in \nu$, the set of
  $<_i$-predecessors of $\beta$ is well-ordered by $<_i$ (cf.\
  \cite[Lemma 12]{well_connected}).

  Using the fact that $(T_\theta)$ holds, we may fix a regular cardinal
  $\chi >> \nu$, a well-ordering $\vartriangleleft$ of $H(\chi)$, and an
  elementary submodel $M \prec (H(\chi), \in, \vartriangleleft, \lambda,
  \theta, \nu, c)$ such that
  \begin{itemize}
    \item $|M| = \theta^+$;
    \item ${^{<\theta}M} \subseteq M$; and
    \item $M$ is $\theta^+$-guessing.
  \end{itemize}
  Let $\nu_M = \sup(M \cap \nu)$.

  \begin{claim}
    $\cf(\nu_M) = \theta^+$.
  \end{claim}

  \begin{proof}
    Since $|M| = \theta^+$, we clearly have $\cf(\nu_M) \leq \theta^+$.
    Suppose for sake of contradiction that $\cf(\nu_M) = \mu < \theta^+$.
    Let $A$ be a cofinal subset of $M \cap \nu_M$ of order type $\mu$, and let
    $\langle \alpha_\eta \mid \eta < \mu\rangle$ be the increasing enumeration of $A$.

    We first show that $A$ is $(\theta^+, M)$-approximated. To this end,
    fix $z \in M \cap \mathcal{P}_{\theta^+}(H(\chi))$. Since $A \subseteq
    \nu$, we may assume that $z \subseteq \nu$. Since $|z| < \theta^+$ and
    $\nu > \theta^+$ is regular, we know that $z$ is bounded below $\nu$. Since
    $z \in M$, it follows by elementarity that $z$ is bounded below
    $\nu_M$, so there is $\xi < \mu$ such that $z \subseteq \alpha_\xi$.
    But then $A \cap z \subseteq \{\alpha_\eta \mid \eta < \xi\}$, and
    hence $A \cap z$ is a subset of $M$ and $|A \cap z| < \mu \leq \theta$.
    Since ${^{< \theta} M} \subseteq M$, it follows that $A \cap z \in M$.
    Since $z$ was arbitrary, we have shown that $A$ is $(\theta^+, M)$-approximated.

    Since $M$ is $\theta^+$-guessing, there is $B \in M$ such that $B \cap M =
    A \cap M = A$. Since $A$ is unbounded in $M \cap \nu$, it follows by elementarity
    that $B$ is unbounded in $\nu$, and hence $\otp(B) = \nu$. Let
    $\pi:\nu \rightarrow B$ be the order-preserving
    bijection, and note that $\pi \in M$. Since ${^{<\theta}M} \subseteq M$
    and $\theta \in M$, we have $\theta + 1 \subseteq M$, and in particular
    $\mu + 1 \subseteq M$. But then $\pi``(\mu+1) \subseteq B \cap M = A \cap M$,
    contradicting the fact that $\otp(A) = \mu$.
  \end{proof}

  Since $\cf(\nu_M) = \theta^+ > \lambda$, there is fixed color $i < \theta$
  and an unbounded set $d_0 \subseteq M \cap \nu_M$ such that
  $c(\beta, \nu_M) = i$ for all $\beta \in d_0$. Note that, for all 
  $\alpha < \beta$, both in $d_0$, we have $\alpha <_i \beta$, as witnessed 
  by the path $\langle \alpha, \nu_M, \beta \rangle$. Let $d$ be the
  $<_i$-downward closure of $d_0$, i.e.,
  \[
    d = \{\alpha < \nu \mid \text{there is } \beta \in d_0 \text{ such that }
    \alpha <_i \beta\}.
  \]
  (Note that it may not be the case that $d \subseteq M$.) We claim that
  $d$ is $(\theta^+, M)$-approximated. To see this, fix $z \in M \cap
  \mathcal{P}_{\theta^+}(H(\chi))$. As in the proof of the claim, we may
  assume that $z \subseteq \nu$, and again as in the proof of the claim
  it follows that there is $\beta \in d_0$ such that $z \subseteq \beta$.
  Then, using the fact that $<_i$ is a tree ordering, we have
  \[
    d \cap z = \{\alpha \in z \mid \alpha <_i \beta\}.
  \]
  Since everything needed to define this latter set is in $M$, we have
  $d \cap z \in M$. Since $z$ was arbitrary, we have shown that $d$ is
  $(\theta^+, M)$-approximated.

  As $M$ is $\theta^+$-guessing, there is $e \in M$ such that
  $e \cap M = d \cap M$. By elementarity, it follows that $e$ is a
  cofinal subset of $\nu$ that is linearly ordered by $<_i$. In other
  words, $e$ is well-connected in color $i$. Since $\nu$ is regular,
  we have $|e| = \nu$, so we have proven our theorem.
\end{proof}

\begin{corollary}
  Suppose that $\mathsf{PFA}$ holds. Then, for every regular cardinal
  $\nu \geq \aleph_2$, we have
  \[
    \nu \rightarrow_{wc} (\nu)^2_{\aleph_0}.
  \]
\end{corollary}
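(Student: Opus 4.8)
The plan is to obtain this as a direct consequence of the preceding theorem, combined with the result of Viale and Weiss (cited above) that $\mathsf{PFA}$ implies $(T_{\aleph_0})$. The one point requiring attention is that the theorem is stated for a regular \emph{uncountable} cardinal $\theta$, whereas the hypothesis we are handed is $(T_{\aleph_0})$. So the first step is to check that the proof of the theorem applies without change in the case $\theta = \aleph_0$.

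This check is routine. The only use of the closure hypothesis ${}^{<\theta}M \subseteq M$ in the proof is to conclude that $\theta + 1 \subseteq M$, and thereby that the various small subsets of $M$ arising in the approximation arguments lie in $M$; when $\theta = \aleph_0$ this is automatic, since any finite sequence of elements of an elementary submodel $M \prec (H(\chi), \in)$ is definable from its entries and hence lies in $M$, and in particular $\omega + 1 \subseteq M$. No other step of the argument — the computation that $\cf(\nu_M) = \theta^+ = \aleph_1$, the verifications that the relevant sets are $(\aleph_1, M)$-approximated, or the invocation of the $\aleph_1$-guessing property of $M$ — makes any use of $\theta$ being uncountable. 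Hence the theorem holds with $\theta = \aleph_0$: if $(T_{\aleph_0})$ holds, then $\nu \rightarrow_{wc} (\nu)^2_\lambda$ for every regular $\nu > \aleph_1$ and every $\lambda \leq \aleph_0$.

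The corollary now follows immediately. Assuming $\mathsf{PFA}$, the Viale--Weiss theorem gives $(T_{\aleph_0})$, and applying the $\theta = \aleph_0$ case of the preceding theorem with $\lambda = \aleph_0$ and with $\nu$ an arbitrary regular cardinal $\geq \aleph_2$ (equivalently, $> \theta^+ = \aleph_1$) yields $\nu \rightarrow_{wc} (\nu)^2_{\aleph_0}$. I do not expect any genuine obstacle here: the whole content is the observation in the previous paragraph that the uncountability of $\theta$ in the statement of the theorem is a convenience of exposition rather than a necessity of the argument, and one could equally well simply rerun the proof of the theorem with $\theta = \aleph_0$ in place of citing it.
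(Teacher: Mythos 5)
Your proposal is correct and follows the paper's intended route: the corollary is obtained by applying the preceding theorem with $\theta = \aleph_0$ (so $\nu > \theta^+ = \aleph_1$ means $\nu \geq \aleph_2$) together with the Viale--Weiss result that $\mathsf{PFA}$ implies $(T_{\aleph_0})$. Your verification that the word ``uncountable'' in the theorem's hypothesis is inessential --- the closure ${}^{<\theta}M \subseteq M$ is used only to place small subsets of $M$ (of size ${<}\theta$) inside $M$ and to get $\theta + 1 \subseteq M$, both of which are automatic for any $M \prec (H(\chi),\in)$ when $\theta = \aleph_0$ --- is exactly the check needed to reconcile the theorem as stated with this application, and it is accurate.
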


\section{Indexed squares and negative results} \label{negative_section}

In this section, we use square principles to isolate situations in which
instances of $\rightarrow_{wc}$ necessarily fail. The results in this section
are refinements of \cite[Lemma 9]{well_connected}. In order to fully state
our results, we introduce \emph{square bracket} versions of the partition
relations being studied.

\begin{definition} \label{well_connected_set_def}
  Suppose that $\nu$ and $\lambda$ are cardinals. Given a coloring $c: [\nu]^k \rightarrow
  \lambda$ and a collection of colors $\Lambda \subseteq \lambda$, we say that a subset
  $X \subseteq \nu$ is \emph{well-connected in $\Lambda$} (with respect to
  $c$) if, for every $\alpha < \beta$ in $X$, there is a finite path
  $\langle \alpha_0, \ldots, \alpha_n \rangle$ (not necessarily contained in $X$)
  such that
  \begin{itemize}
    \item $\alpha_0 = \alpha$ and $\alpha_n = \beta$;
    \item $\alpha_k \geq \alpha$ for all $k \leq n$; and
    \item $c(\alpha_k, \alpha_{k + 1}) \in \Lambda$ for all $k < n$.
  \end{itemize}
\end{definition}

\begin{definition}
  Suppose that $\mu$, $\nu$, $\lambda$, and $\kappa$ are cardinals.
  \begin{enumerate}
    \item The partition relation $\nu \rightarrow_{hc} [\mu]^2_{\lambda, \kappa}$
    (\emph{resp.\ }$\nu \rightarrow_{hc} [\mu]^2_{\lambda, {<}\kappa}$) is
    the assertion that, for every coloring $c:[\nu]^2 \rightarrow \lambda$,
    there is an $X \in [\nu]^\mu$, a highly connected subgraph
    $(X, E)$ of $(\nu, [\nu]^2)$, and a set $\Lambda \in [\lambda]^{{\leq}\kappa}$
    (\emph{resp.\ }$\Lambda \in [\lambda]^{{<}\kappa}$) such that
    $c``E \subseteq \Lambda$.
    \item The partition relation $\nu \rightarrow_{wc} [\mu]^2_{\lambda, \kappa}$
    (\emph{resp.\ }$\nu \rightarrow_{wc} [\mu]^2_{\lambda, {<}\kappa}$) is the
    assertion that, for every coloring $c:[\nu]^2 \rightarrow \lambda$, there
    are $X \in [\nu]^\mu$ and $\Lambda \in [\lambda]^{{\leq}\kappa}$
    (\emph{resp.\ }$\Lambda \in [\lambda]^{{<}\kappa}$) such that $X$ is
    well-connected in $\Lambda$.
  \end{enumerate}
\end{definition}

As usual, the negations of these partition relations will be denoted by, e.g.,
$\nu \not\rightarrow_{hc} [\mu]^2_{\lambda, \kappa}$.
We now recall certain square principles known as \emph{indexed square principles}.

\begin{definition}[Cummings-Foreman-Magidor \cite{cfm}]
  Suppose that $\mu$ is a singular cardinal. A $\square^{\mathrm{ind}}_{\mu,
  \cf(\mu)}$-sequence is a sequence $\langle C_{\alpha, i} \mid
  \alpha \in \acc(\mu^+), ~ i(\alpha) \leq i < \cf(\mu) \rangle$ such that
  \begin{enumerate}
    \item for all $\alpha \in \acc(\mu^+)$, we have $i(\alpha) < \cf(\mu)$;
    \item for all $\alpha \in \acc(\mu^+)$ and all $i(\alpha) \leq i < \cf(\mu)$,
    $C_{\alpha, i}$ is a club in $\alpha$;
    \item for all $\alpha \in \acc(\mu^+)$ and all $i(\alpha) \leq i
    < j < \cf(\mu)$, we have $C_{\alpha, i} \subseteq C_{\alpha, j}$;
    \item for all $\alpha < \beta$ in $\acc(\mu^+)$ and all $i(\beta) \leq i <
    \cf(\mu)$, if $\alpha \in \acc(C_{\beta, i})$, then $i(\alpha) \leq i$
    and $C_{\alpha, i} = C_{\beta, i} \cap \alpha$;
    \item for all $\alpha < \beta$ in $\acc(\mu^+)$, there is $i$ such that
    $i(\beta) \leq i < \cf(\mu)$ and $\alpha \in \acc(C_{\beta, i})$;
    \item there is an increasing sequence $\langle \mu_i \mid i <
    \cf(\mu) \rangle$ of regular cardinals such that
    \begin{enumerate}
      \item $\sup(\{\mu_i \mid i < \cf(\mu)\}) = \mu$;
      \item for all $\alpha \in \acc(\mu^+)$ and all $i(\alpha) \leq i <
      \cf(\mu)$, we have $|C_{\alpha, i}| < \mu_i$.
    \end{enumerate}
  \end{enumerate}
  $\square^{\mathrm{ind}}_{\mu, \cf(\mu)}$ is the assertion that there is a
  $\square^{\mathrm{ind}}_{\mu, \cf(\mu)}$-sequence.
\end{definition}

\begin{definition}[\cite{narrow_systems}] \label{indexed_square_mu_lambda_def}
  Suppose that $\lambda < \mu$ are infinite regular cardinals. Then a $\square^{\mathrm{ind}}
  (\mu, \lambda)$-sequence is a sequence $\langle C_{\alpha, i} \mid \alpha \in
  \acc(\mu), ~ i(\alpha) \leq i < \lambda \rangle$ such that
  \begin{enumerate}
    \item for all $\alpha \in \acc(\mu)$, we have $i(\alpha) < \lambda$;
    \item for all $\alpha \in \acc(\mu)$ and all $i(\alpha) \leq i < \lambda$,
    $C_{\alpha, i}$ is a club in $\alpha$;
    \item for all $\alpha \in \acc(\mu)$ and all $i(\alpha) \leq i < j < \lambda$,
    we have $C_{\alpha, i} \subseteq C_{\alpha, j}$;
    \item for all $\alpha < \beta$ in $\acc(\mu)$ and all $i(\beta) \leq i < \lambda$,
    if $\alpha \in \acc(C_{\beta, i})$, then $i(\alpha) \leq i$ and
    $C_{\alpha, i} = C_{\beta, i} \cap \alpha$;
    \item for all $\alpha < \beta$ in $\acc(\mu)$, there is $i$ such that
    $i(\beta) \leq i < \lambda$ and $\alpha \in \acc(C_{\beta, i})$;
    \item for every club $D$ in $\mu$ and every $i < \lambda$, there is
    $\alpha \in \acc(D)$ such that $D \cap \alpha \neq C_{\alpha, i}$.
  \end{enumerate}
  $\square^{\mathrm{ind}}(\mu, \lambda)$ is the assertion that there is a
  $\square^{\mathrm{ind}}(\mu, \lambda)$-sequence.
\end{definition}

\begin{remark} \label{indexed_square_remark}
  These indexed square principles follow from more familiar non-indexed square principles.
  For example, if $\mu$ is singular, then $\square_\mu$ implies
  $\square^{\mathrm{ind}}_{\mu, \cf(\mu)}$ (cf.\ \cite{lh_aronszajn_trees}),
  while, if $\lambda < \mu$ are regular infinite cardinals, then
  $\square(\mu)$ implies $\square^{\mathrm{ind}}(\mu, \lambda)$
  (cf.\ \cite{lh_lucke}).
\end{remark}

\begin{theorem} \label{bounded_order_negative_relation}
  Suppose that $\mu$ is a singular cardinal and $\square_\mu$ holds. Then
  \[
    \mu^+ \not\rightarrow_{wc} [\mu]^2_{\cf(\mu), < \cf(\mu)}.
  \]
\end{theorem}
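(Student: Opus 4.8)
The plan is to use a $\square^{\mathrm{ind}}_{\mu,\cf(\mu)}$-sequence — which exists by $\square_\mu$ and Remark~\ref{indexed_square_remark} — to define a coloring $c:[\mu^+]^2 \rightarrow \cf(\mu)$ witnessing the negative relation. Fix such a sequence $\langle C_{\alpha,i} \mid \alpha \in \acc(\mu^+),~ i(\alpha) \leq i < \cf(\mu)\rangle$ together with the associated sequence $\langle \mu_i \mid i < \cf(\mu)\rangle$ of regular cardinals. The natural coloring to try is the one that reads off the least index $i$ at which $\beta$ ``sees'' $\alpha$: for $(\alpha,\beta) \in [\acc(\mu^+)]^2$ set $c(\alpha,\beta)$ to be the least $i$ with $i(\beta) \leq i < \cf(\mu)$ and $\alpha \in \acc(C_{\beta,i})$ (such $i$ exists by clause (5)), and assign non-accumulation points some default value so that $c$ is total on $[\mu^+]^2$. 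The point of clause (4) (coherence) is that this coloring is essentially constant along the branches $C_{\beta,i}$, which lets me bound the ``reach'' of a path using only colors in a small set.

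The key step is then the combinatorial heart: show that if $X \subseteq \mu^+$ has size $\mu$ and $\Lambda \in [\cf(\mu)]^{<\cf(\mu)}$, then $X$ cannot be well-connected in $\Lambda$. Since $|\Lambda| < \cf(\mu)$ and $\cf(\mu)$ is regular, $\Lambda$ is bounded, so fix $j < \cf(\mu)$ with $\Lambda \subseteq j$. The crucial sublemma to establish is: for each $\beta < \mu^+$ there is a bound $\delta_\beta < \mu^+$ such that any $\Lambda$-path starting at some $\alpha$ with $\alpha \leq \beta$ (equivalently, as in Definition~\ref{wc_def}, any path all of whose vertices are $\geq \alpha$ and whose edges are colored in $\Lambda$) reaching $\beta$ in fact uses only vertices below $\delta_\beta$ — more usefully, I want to bound the set of ordinals reachable \emph{from} a fixed $\alpha$ along $\Lambda$-paths, staying above $\alpha$, by a set of size $< \mu$. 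The idea is that an edge $\{\gamma,\gamma'\}$ with $\gamma < \gamma'$ and color in $\Lambda \subseteq j$ forces $\gamma \in \acc(C_{\gamma',c(\gamma,\gamma')}) \subseteq \acc(C_{\gamma',j})$ (using clause (3) monotonicity), and $|C_{\gamma',j}| < \mu_j < \mu$; iterating this along a finite path and using coherence (clause (4)) to glue the clubs, the set of all vertices reachable from $\alpha$ via such paths is contained in a single club segment $C_{\gamma,j}$ for an appropriate $\gamma$, hence has size $< \mu_j$. Therefore, from any single $\alpha \in X$, only $<\mu$ many ordinals can be $\Lambda$-connected to $\alpha$ while staying $\geq \alpha$; but $X$ has size $\mu$ and $\mu^+$ is regular, so picking $\alpha = \min(X)$ gives $\mu$-many $\beta \in X$ that must be $\Lambda$-connected to $\alpha$ from above, a contradiction.

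I expect the main obstacle to be making precise the ``the reachable set from $\alpha$ sits inside one small club'' argument — i.e.\ verifying that coherence really does let the clubs $C_{\gamma,j}$ encountered along a path amalgamate into an initial segment of a single $C_{\gamma^*,j}$ with $\gamma^*$ the largest vertex on the path. One has to be careful that a path can go up and down in the ordinal order (the only constraint from Definition~\ref{wc_def} is that all vertices are $\geq \alpha_0 = \alpha$), so the argument should proceed by: for each edge $\{\gamma,\gamma'\}$ with $\gamma<\gamma'$ we get $\gamma \in \acc(C_{\gamma',j})$; then for the top vertex $\gamma^*$ of the path, repeatedly applying clause (4) shows every vertex lies in $C_{\gamma^*,j} \cup \{\gamma^*\}$, provided $\gamma^* \in \acc(\mu^+)$ (the non-accumulation and successor cases are handled by the default color). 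A secondary bookkeeping point is ensuring the default coloring on non-accumulation points does not accidentally create extra $\Lambda$-connections; assigning those pairs a color \emph{outside} any possible $\Lambda$ is impossible since $\Lambda$ ranges over all of $\cf(\mu)$, so instead one argues that an edge to or from a non-accumulation point can be absorbed into the club-reachability bound by passing to the sup of the relevant club, which is still $<\mu_j$. Modulo these care-points, the contradiction at size $\mu$ is immediate.
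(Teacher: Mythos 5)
Your setup (the $\square^{\mathrm{ind}}_{\mu,\cf(\mu)}$-sequence, the coloring $c(\alpha,\beta)=$ least $i$ with $i(\beta)\le i$ and $\alpha\in\acc(C_{\beta,i})$, and the observation that $|\Lambda|<\cf(\mu)$ forces $\Lambda\subseteq j$ for some $j<\cf(\mu)$) is exactly right, and the edge-level fact you extract ($\gamma<\gamma'$ with color in $\Lambda$ gives $\gamma\in\acc(C_{\gamma',j})$ by monotonicity) is the correct starting point. But the combinatorial heart of your argument has a genuine gap: you aim to bound the set of vertices \emph{reachable upward} from $\alpha=\min(X)$, via the sub-claim that all vertices of a $\Lambda$-path lie in $C_{\gamma^*,j}\cup\{\gamma^*\}$ for the maximal vertex $\gamma^*$ of the path, and hence that the whole reachable set sits inside one small club. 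That sub-claim is false: take a path $v_0,v_1,v_2,v_3$ with $v_0<v_2<v_3<v_1$, so $\gamma^*=v_1$. The edges only give $v_0,v_2\in\acc(C_{v_1,j})$ and $v_2\in\acc(C_{v_3,j})$; coherence (clause (4)) then identifies $C_{v_1,j}\cap v_2$ with $C_{v_3,j}\cap v_2$, but nothing puts $v_3$ into $C_{v_1,j}$ — coherence only transfers information about \emph{initial segments below the meeting point}, never upward into a previously fixed club. Since the maximal vertex varies from path to path, there is also no single $\gamma$ whose club contains all reachable endpoints, and indeed nothing in the square sequence makes the reachable set from a fixed $\alpha$ have size $<\mu$; the theorem does not need (and your argument cannot deliver) such a bound. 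The statement that \emph{is} provable by your induction-on-path-length idea is endpoint-directed: if $\alpha<\beta$ are joined by a $\Lambda$-path all of whose vertices are $\ge\alpha$, then $\alpha\in\acc(C_{\beta,j})$ (at the last edge, compare $\alpha_{n-1}$ with $\beta$, and use coherence below $\min\{\alpha_{n-1},\beta\}$). With that lemma one looks \emph{downward} from each $\beta\in X$: every element of $X\cap\beta$ lies in $\acc(C_{\beta,j})$, which has size $<\mu_j$, so $\otp(X)\le\mu_j<\mu$ — this is how the contradiction should be reached, rather than via $\min(X)$.

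A secondary but real problem is your treatment of non-accumulation points. Extending $c$ to all of $[\mu^+]^2$ by a default color destroys the witnessing: whatever default color $i^*$ you choose, the set of all successor ordinals below $\mu^+$ (of size $\mu^+$) is well-connected in $\{i^*\}$ via direct edges, and your proposed repair (absorbing such edges into a club bound) says nothing about pairs of non-accumulation points and cannot block this. The cheap and standard fix is the one you should use: define $c$ only on $[\acc(\mu^+)]^2$ and compose with the order-preserving bijection $\mu^+\to\acc(\mu^+)$, so that non-accumulation points never occur as vertices.
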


\begin{proof}
  By Remark \ref{indexed_square_remark}, $\square_\mu$ implies
  $\square^{\mathrm{ind}}_{\mu, \cf(\mu)}$. Therefore, we may fix a
  $\square^{\mathrm{ind}}_{\mu, \cf(\mu)}$-sequence $\langle C_{\alpha, i} \mid
  \alpha \in \acc(\mu^+), ~ i(\alpha) \leq i < \cf(\mu) \rangle$ and a
  sequence $\langle \mu_i \mid i < \cf(\mu) \rangle$ of regular cardinals
  such that
  \begin{itemize}
    \item $\sup(\{\mu_i \mid i < \cf(\mu)\}) = \mu$;
    \item for all $\alpha \in \acc(\mu^+)$ and all $i(\alpha) \leq i < \cf(\mu)$,
    we have $|C_{\alpha, i}| < \mu_i$.
  \end{itemize}
  Define a coloring $c:[\acc(\mu^+)]^2 \rightarrow \cf(\mu)$ by letting $c({\alpha,
  \beta})$ be the least ordinal $i < \cf(\mu)$ such that $i(\beta) \leq i$ and
  $\alpha \in \acc(C_{\beta, i})$ for all $\alpha < \beta$ in $\acc(\mu^+)$.

  We claim that $c$ witnesses the negative relation $\mu^+ \not\rightarrow_{wc}
  [\mu]^2_{\cf(\mu), < \cf(\mu)}$. (More formally, the composition of $c$ with the unique
  order-preserving bijection from $\mu^+$ to $\acc(\mu^+)$ will witness
  the negative relation.) To show this, we will prove that if $i < \cf(\mu)$ is
  a color, $\Lambda \subseteq i$, and $X \subseteq \acc(\mu^+)$ is well-connected in
  $\Lambda$, then $\otp(X) \leq \mu_i$.

  To this end, fix a color $i < \cf(\mu)$, a set $\Lambda \subseteq i$, and a set
  $X \subseteq \acc(\mu^+)$ that is well-connected in $\Lambda$.

  \begin{claim}
    For all $\alpha < \beta$ in $X$, we have $\alpha \in \acc(C_{\beta, i})$.
  \end{claim}

  \begin{proof}
    The proof will be by induction on the minimal length of a path connecting
    $\alpha$ and $\beta$ as in Definition \ref{well_connected_set_def}.

    Fix $\alpha < \beta$ in $X$. Since $X$ is well-connected in $\Lambda$, we
    can fix a path $\vec{\alpha} = \langle \alpha_0, \ldots, \alpha_n \rangle$ such that
    \begin{itemize}
      \item $\alpha_0 = \alpha$ and $\alpha_n = \beta$;
      \item $\alpha_k \geq \alpha$ for all $k \leq n$; and
      \item $c(\alpha_k, \alpha_{k + 1}) \in \Lambda$ for all $k < n$.
    \end{itemize}
    Assume moreover that $\vec{\alpha}$ has minimal length among all such paths.
    If $n = 1$, then we will have $c(\alpha, \beta) \in \Lambda$. Therefore,
    there is $j \in \Lambda$ such that $\alpha \in \acc(C_{\beta, j}) \subseteq
    \acc(C_{\beta, i})$, so $\alpha \in \acc(C_{\beta, i})$, as desired.

    Suppose therefore that $n > 1$ and that we have established the claim for
    all pairs connected by paths of length less than $n$. In particular,
    it follows that $\alpha \in \acc(C_{\alpha_{n-1}, i})$. Note also that
    $c(\alpha_{n-1}, \beta) \in \Lambda$, so either $\alpha_{n-1} \in
    \acc(C_{\beta, i})$ or $\beta \in \acc(C_{\alpha_{n-1}}, i)$. 
    Let $\gamma = \min(\{\alpha_{n-1}, \beta\})$.
    Then $C_{\alpha_{n-1}, i} \cap \gamma = C_{\beta, i} \cap \gamma$, so, since
    $\alpha \in \acc(C_{\alpha_{n-1}, i})$, it follows that $\alpha \in
    \acc(C_{\beta, i})$, as desired.
  \end{proof}

  Now suppose for sake of contradiction that $\otp(X) > \mu_i$. It follows
  that there is $\beta \in X$ such that $\otp(X \cap \beta) = \mu_i$. But
  then, by the claim, we have $X \cap \beta \subseteq \acc(C_{\beta, i})$,
  contradicting the fact that $|C_{\beta, i}| < \mu_i$. Therefore, for
  every color $i < \cf(\mu)$, every $\Lambda \subseteq i$,
  and every set $X \subseteq \acc(\mu^+)$ that
  is well-connected in $\Lambda$, we have $|X| < \mu$, and hence $c$ witnesses
  $\mu^+ \not\rightarrow_{wc} [\mu]^2_{\cf(\mu), < \cf(\mu)}$.
\end{proof}

\begin{theorem} \label{unbounded_order_negative_relation}
  Suppose that $\lambda < \mu$ are infinite regular cardinals and
  $\square(\mu)$ holds. Then
  \[
    \mu \not\rightarrow_{wc} [\mu]^2_{\lambda, < \lambda}.
  \]
\end{theorem}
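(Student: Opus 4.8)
The plan is to adapt the proof of Theorem \ref{bounded_order_negative_relation} to the $\square(\mu)$ setting, using the implication $\square(\mu) \Rightarrow \square^{\mathrm{ind}}(\mu, \lambda)$ from Remark \ref{indexed_square_remark}. First I would fix a $\square^{\mathrm{ind}}(\mu, \lambda)$-sequence $\langle C_{\alpha, i} \mid \alpha \in \acc(\mu), ~ i(\alpha) \leq i < \lambda \rangle$ and define a coloring $c : [\acc(\mu)]^2 \to \lambda$ by letting $c(\alpha, \beta)$ be the least $i < \lambda$ such that $i(\beta) \leq i$ and $\alpha \in \acc(C_{\beta, i})$; clause (5) of the definition of $\square^{\mathrm{ind}}(\mu,\lambda)$ guarantees such an $i$ always exists. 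After composing with the order-isomorphism from $\mu$ to $\acc(\mu)$, this $c$ will be the witness. As before, I would claim that whenever $i < \lambda$, $\Lambda \subseteq i$, and $X \subseteq \acc(\mu)$ is well-connected in $\Lambda$, then $X$ cannot have size $\mu$.

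The central combinatorial step is the analogue of the Claim in the previous proof: if $X$ is well-connected in $\Lambda \subseteq i$, then for all $\alpha < \beta$ in $X$ we have $\alpha \in \acc(C_{\beta, i})$. The proof is identical — induction on the minimal path length, using coherence (clause (4)) to transfer membership in $\acc(C_{\gamma,i})$ across the path, and using $\Lambda \subseteq i$ together with clause (3) (the $\subseteq$-increasing property in the index) to pass from the color $j = c(\alpha_k, \alpha_{k+1}) \in \Lambda$ up to $i$. So far nothing has used the non-triviality clause; this is where the two theorems genuinely diverge, because in the singular case one had a size bound $|C_{\alpha,i}| < \mu_i < \mu$ available on every block, whereas here the clubs $C_{\beta,i}$ can have size up to $\mu$.

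The hard part will therefore be deriving the contradiction from the non-triviality clause (6) rather than from a cardinality bound. The idea: suppose toward a contradiction that $X \subseteq \acc(\mu)$ is well-connected in some $\Lambda \subseteq i$ and $|X| = \mu$, so $X$ is unbounded in $\mu$; passing to $D := \acc(X)$ (a club in $\mu$), I would argue that for every $\beta \in \acc(D)$ we have $D \cap \beta = C_{\beta, i}$, directly contradicting clause (6) applied to $D$ and the fixed index $i$. To see $D \cap \beta \subseteq C_{\beta,i}$: if $\alpha \in D \cap \beta$ then $\alpha$ is a limit of points of $X$ below $\beta$, each of which lies in $\acc(C_{\beta,i})$ by the Claim, and $\acc(C_{\beta,i}) \subseteq C_{\beta,i}$ is closed, so $\alpha \in C_{\beta,i}$. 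The reverse inclusion $C_{\beta,i} \cap \beta \subseteq D$ is where I must be slightly careful: I would fix $\gamma \in \acc(D)$ with $\gamma < \beta$ (possible since $\acc(D)$ is unbounded in $\mu$, hence has points above any fixed ordinal; actually I need a point of $\acc(D)$ above $\beta$), note $\beta \in \acc(C_{\gamma, i})$ by the Claim-type argument applied to the pair $\beta < \gamma$ in — wait, $\beta$ need not be in $X$. To handle this cleanly I would instead restrict attention to $\beta \in X \cap \acc(D)$ (still club-many such $\beta$ since $X$ is unbounded and $\acc(D)$ is club), and for such $\beta$ use that every $\alpha \in X \cap \beta$ satisfies $\alpha \in \acc(C_{\beta,i})$, giving $\acc(X \cap \beta) \subseteq C_{\beta,i}$; conversely, to get $C_{\beta,i} \cap \beta \subseteq \acc(X\cap\beta)$, I would pick any $\beta' \in X$ with $\beta < \beta'$, observe $\beta \in \acc(C_{\beta',i})$, so $C_{\beta,i} = C_{\beta',i}\cap\beta$ by clause (4), and since also $\acc(X\cap\beta') \supseteq$ (the closure argument) $\dots$ — the upshot is that $C_{\beta,i}$ coincides with $\acc(X) \cap \beta = D \cap \beta$ on a club of $\beta$, contradicting clause (6). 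I would write this final coherence bookkeeping out carefully, as it is the one place where $\square(\mu)$'s non-triviality is essential and the singular-case argument gives no guidance.
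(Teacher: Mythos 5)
Your setup, coloring, and the key Claim (for all $\alpha < \beta$ in $X$, $\alpha \in \acc(C_{\beta,i})$, proved by induction on minimal path length using clauses (3) and (4)) all match the paper's argument, and you correctly identified that the contradiction must come from the non-triviality clause (6) rather than from a cardinality bound. However, the final step as you propose it has a genuine gap: with $D := \acc(X)$ (the limit points of $X$), the inclusion $C_{\beta,i} \subseteq D$ is not justified and is false in general. The hypotheses only constrain $X$ from above --- they say $X \cap \beta$ sits inside $\acc(C_{\beta,i})$ --- but nothing forces $X$ to be dense in $C_{\beta,i}$, so points of $C_{\beta,i}$ need not be limit points of $X$. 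Moreover, your fallback conclusion that $C_{\beta,i}$ and $D \cap \beta$ ``coincide on a club of $\beta$'' is vacuous (any two clubs in $\beta$ agree on a club) and does not contradict clause (6), which requires the exact equality $D \cap \alpha = C_{\alpha,i}$ at every $\alpha \in \acc(D)$ to fail somewhere. So the argument cannot be completed with this choice of $D$.

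The repair, which is what the paper does, is to take $D := \bigcup_{\alpha \in X} C_{\alpha, i}$ instead. By your Claim together with coherence (clause (4)), for $\alpha < \beta$ in $X$ we have $\alpha \in \acc(C_{\beta,i})$ and hence $C_{\alpha,i} = C_{\beta,i} \cap \alpha$; thus the clubs $C_{\alpha,i}$ for $\alpha \in X$ end-extend one another, so $D$ is a club in $\mu$ with $D \cap \beta = C_{\beta,i}$ for every $\beta \in X$. Then for any $\alpha \in \acc(D)$, setting $\beta = \min(X \setminus (\alpha+1))$ gives $D \cap \alpha = C_{\beta,i} \cap \alpha$, and since $\alpha \in \acc(D)$ implies $\alpha \in \acc(C_{\beta,i})$, clause (4) yields $C_{\alpha,i} = C_{\beta,i} \cap \alpha = D \cap \alpha$. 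This holds at every $\alpha \in \acc(D)$, contradicting clause (6) for the fixed index $i$. Everything else in your proposal goes through unchanged once this substitution is made.
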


\begin{proof}
  The proof is quite similar to that of Theorem \ref{bounded_order_negative_relation},
  so we only indicate its differences. By Remark \ref{indexed_square_remark},
  we can fix a $\square^{\mathrm{ind}}(\mu, \lambda)$-sequence $\langle C_{\alpha, i} \mid
  \alpha \in \acc(\mu), ~ i(\alpha) \leq i < \lambda \rangle$. Define a coloring
  $c : [\acc(\mu)]^2 \rightarrow \lambda$ by letting $c(\alpha, \beta)$ be the least
  ordinal $i < \lambda$ such that $i(\beta) \leq i$ and $\alpha \in \acc(C_{\beta, i})$.

  We claim that $c$ witnesses the negative partition relation in the statement of
  the theorem.
  Suppose that $i < \lambda$, $\Lambda \subseteq i$, and $X \subseteq \acc(\mu)$
  is well-connected in $\Lambda$. Exactly as in the proof of Theorem
  \ref{bounded_order_negative_relation}, we can prove that, for all
  $\alpha < \beta$ in $X$, we have $\alpha \in \acc(C_{\beta, i})$. Now suppose
  for sake of contradiction that $|X| = \mu$, and let $D = \bigcup_{\alpha \in X}
  C_{\alpha, i}$. Since $C_{\beta, i}$ end-extends $C_{\alpha, i}$ for all
  $\alpha < \beta$ in $X$, it follows that $D$ is a club in $\mu$.

  We claim that $D \cap \alpha = C_{\alpha, i}$ for all $\alpha \in \acc(D)$.
  Indeed, if $\alpha \in \acc(D)$, then, letting $\beta = \min(X \setminus (\alpha + 1))$,
  we have $D \cap \beta = C_{\beta, i}$, so $D \cap \alpha = C_{\beta, i} \cap
  \alpha$. Since $\alpha \in \acc(D)$, it follows that $\alpha \in \acc(C_{\beta, i})$,
  so, by Definition \ref{indexed_square_mu_lambda_def}, we have
  $C_{\alpha, i} = C_{\beta, i} \cap \alpha = D \cap \alpha$.

  $D$ is then a counterexample to clause (6) of Definition \ref{indexed_square_mu_lambda_def},
  so it follows that $|X| < \mu$. Therefore, $c$ witnesses
  $\mu \not\rightarrow_{wc} [\mu]^2_{\lambda, < \lambda}$.
\end{proof}

Recall that, if $\mu$ is a regular uncountable cardinal and $\square(\mu)$ fails,
then $\mu$ is weakly compact in $\mathrm{L}$. As a result, we immediately obtain
the following equiconsistency.

\begin{corollary} \label{wkly_compact_cor}
  The following statements are equiconsistent over $\mathrm{ZFC}$.
  \begin{enumerate}
    \item There exists a weakly compact cardinal.
    \item There exist infinite regular cardinals $\lambda < \mu$ such that
    $\mu \rightarrow_{wc} [\mu]^2_{\lambda, <\lambda}$ holds.
  \end{enumerate}
\end{corollary}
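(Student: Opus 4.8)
The plan is to establish the equiconsistency by proving two implications, one in each direction, using the two theorems immediately preceding the corollary together with the stated fact about $\square(\mu)$ and weak compactness in $\mathrm{L}$.

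For the direction from (1) to (2): starting from a model with a weakly compact cardinal, the idea is to note that weak compactness is exactly the kind of hypothesis under which the relevant positive partition relation provably holds. More precisely, I would work in a model in which $\kappa$ is weakly compact and take $\mu = \kappa$; since weakly compact cardinals are in particular weakly compact (hence satisfy, e.g., $\mu \rightarrow (\mu)^2_2$ and more), the classical partition relation gives $\mu \rightarrow (\mu)^2_\lambda$ for any $\lambda < \mu$ that is smaller than the relevant compactness bound—certainly for some infinite regular $\lambda < \mu$, e.g. $\lambda = \aleph_0$. Then, using the implication chain $\nu \rightarrow (\mu)^2_\lambda \Rightarrow \nu \rightarrow_{hc} (\mu)^2_\lambda \Rightarrow \nu \rightarrow_{wc} (\mu)^2_\lambda$ recalled in the introduction, together with the trivial observation that $\nu \rightarrow_{wc} (\mu)^2_\lambda$ implies $\nu \rightarrow_{wc} [\mu]^2_{\lambda, <\lambda}$ whenever $\lambda$ is infinite (a single color is a set of size $1 < \lambda$, so well-connectedness in one color witnesses well-connectedness in a $<\lambda$-sized palette), we obtain $\mu \rightarrow_{wc} [\mu]^2_{\lambda, <\lambda}$. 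Thus from a weakly compact cardinal we directly obtain a model of (2), with no extra forcing needed—indeed the consistency of (2) follows outright from the consistency of (1).

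For the direction from (2) to (1): suppose $\lambda < \mu$ are infinite regular cardinals with $\mu \rightarrow_{wc} [\mu]^2_{\lambda, <\lambda}$. By Theorem \ref{unbounded_order_negative_relation}, $\square(\mu)$ must fail, since $\square(\mu)$ would yield $\mu \not\rightarrow_{wc} [\mu]^2_{\lambda, <\lambda}$. (One should also check $\mu$ is uncountable: $\mu > \lambda \geq \aleph_0$, and in fact $\mu \geq \aleph_2$ since $\lambda \geq \aleph_0$ forces $\mu > \aleph_0$, so $\mu \geq \aleph_1$, and $\lambda$ regular with $\lambda < \mu$ together with standard facts rule out $\mu = \aleph_1$; alternatively one simply notes that the failure of $\square(\mu)$ already requires $\mu$ to be regular uncountable and in fact inaccessible.) Now invoke the stated fact: if $\mu$ is a regular uncountable cardinal and $\square(\mu)$ fails, then $\mu$ is weakly compact in $\mathrm{L}$. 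Hence $\mathrm{L}$ is a model of ``there is a weakly compact cardinal,'' giving $\mathrm{Con}(\mathrm{ZFC}) \to \mathrm{Con}(\text{(1)})$ relative to (2).

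The main obstacle—modest as it is—is the bookkeeping at the boundary: verifying that $\mu$ in clause (2) is genuinely regular and uncountable so that the $\square(\mu)$/weak-compactness dictionary applies, and checking the easy but necessary implication $\nu \rightarrow_{wc} (\mu)^2_\lambda \Rightarrow \nu \rightarrow_{wc}[\mu]^2_{\lambda,<\lambda}$ for infinite $\lambda$. Both are routine; no new combinatorial content is required beyond the two preceding theorems and the cited $\mathrm{L}$-absoluteness fact.
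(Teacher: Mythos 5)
Your proposal is correct and matches the paper's intended (unwritten, ``immediate'') argument: Theorem \ref{unbounded_order_negative_relation} together with the quoted fact that failure of $\square(\mu)$ at a regular uncountable $\mu$ makes $\mu$ weakly compact in $\mathrm{L}$ gives the consistency-strength lower bound, while the classical relation $\kappa \rightarrow (\kappa)^2_\lambda$ at a weakly compact $\kappa$, passed through $\rightarrow_{hc}$, $\rightarrow_{wc}$, and a singleton palette $\Lambda=\{i\}\in[\lambda]^{<\lambda}$, gives clause (2) outright. One aside is inaccurate but harmless: the failure of $\square(\mu)$ does not make $\mu$ inaccessible in $V$ (e.g.\ $\square(\aleph_2)$ fails under $\mathsf{PFA}$); this is not needed, since the $\mathrm{L}$-fact as stated only requires $\mu$ regular and uncountable, which clause (2) already guarantees.
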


\section{A sharp positive result at the continuum} \label{forcing_sec}

In \cite[Proposition 8]{highly_connected}, it is shown that, for all infinite
cardinals $\mu$ and $\lambda$ with $\mu \leq 2^\lambda$, we have the negative
relation
\[
  \mu \not\rightarrow_{hc} (\mu)^2_\lambda.
\]
In particular,
\[
  2^\lambda \not\rightarrow_{hc} (2^\lambda)^2_\lambda.
\]
In \cite{highly_connected}, this was seen to be sharp in the sense that reducing
the number of colors results in a consistent statement.
In particular, it was shown that assuming the consistency of a weakly compact
cardinal, it is consistent that,
for example, the positive relation $2^{\aleph_1} \rightarrow_{hc}
(2^{\aleph_1})^2_{\aleph_0}$ holds.

In this section, we show it is sharp in a different sense, namely that
increasing the number of colors allowed to appear in the desired homogeneous
set also results in a consistent statement.
More precisely, we show that assuming the consistency of a weakly compact
cardinal, the positive relation
$2^\lambda \rightarrow_{hc} [2^\lambda]^2_{\lambda, 2}$ consistently holds.

\begin{theorem} \label{positive_square_bracket_relation}
  Suppose that $\lambda < \theta$ are infinite regular cardinals and
  $\theta$ is weakly compact. Let $\bb{P}$ be the forcing to add
  $\theta$-many Cohen subsets to $\lambda$. Then, after forcing with
  $\bb{P}$, we have
  \[
    2^\lambda \rightarrow_{hc} [2^\lambda]^2_{\lambda', 2}
  \]
  for all $\lambda' < 2^\lambda$.
\end{theorem}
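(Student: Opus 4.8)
The plan is to force with $\bb{P}$ = $\mathrm{Add}(\lambda, \theta)$, which makes $2^\lambda = \theta$ since $\theta$ is weakly compact (in particular inaccessible), and then to use the weak compactness of $\theta$ in the ground model together with a ``mixing'' argument to extract a highly connected monochromatic-up-to-two-colors subgraph of size $\theta$. Fix, in the extension, a coloring $c : [\theta]^2 \to \lambda'$ with $\lambda' < \theta$. First I would observe that $\bb{P}$ is $\lambda^{+}$-cc and has a natural product structure: for $X \subseteq \theta$, $\bb{P} \restriction X := \mathrm{Add}(\lambda, X)$ is a complete subforcing, and $\bb{P} \cong (\bb{P}\restriction X) \times (\bb{P} \restriction (\theta \setminus X))$. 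Using a name for $c$ and the chain condition, find for each pair $\{\alpha,\beta\} \in [\theta]^2$ a set $s_{\alpha\beta} \in [\theta]^{\leq \lambda}$ with $\{\alpha,\beta\} \subseteq s_{\alpha\beta}$ such that the color $c(\alpha,\beta)$ is decided by $\bb{P} \restriction s_{\alpha\beta}$; normalize so that $\otp(s_{\alpha\beta})$ depends only on $\otp(\{\alpha,\beta\})$-type information, and so that $\alpha,\beta$ sit at fixed relative positions inside $s_{\alpha\beta}$.

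Next I would apply the weak compactness of $\theta$ in $V$. The relevant combinatorial consequence is the tree property / the partition property $\theta \to (\theta)^2_{<\theta}$ applied in $V$, or more precisely a $\Delta$-system-plus-indiscernibility argument: thin out to a set $H \in [\theta]^\theta$ so that the family $\{s_{\alpha\beta} : \{\alpha,\beta\} \in [H]^2\}$ forms a $\Delta$-system with root $r$, all the ``side conditions'' outside $r$ are mutually aligned in the sense of the Definition just stated, and the function sending $(\otp(\alpha \cap H), \otp(\beta \cap H))$ to the forced color, read off the aligned copies, is constant — call this common value the pair of colors $\{i_0, i_1\}$ (two colors appear because the generic for the ``$\alpha$-side'' and the ``$\beta$-side'' of $s_{\alpha\beta} \setminus r$ can disagree, but alignment pins the possibilities down to at most two). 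This is the step I expect to be the main obstacle: getting the bookkeeping right so that, after the $\Delta$-system reduction and the homogenization of order-types, \emph{every} pair from $H$ has its color forced into a fixed $2$-element set $\Lambda = \{i_0,i_1\}$ \emph{by conditions living on aligned copies of a fixed finite template}, since that alignment is exactly what lets two different generic conditions be amalgamated.

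Finally, with such an $H$ in hand, I would verify in the extension that $G := (H, c^{-1}(\Lambda) \cap [H]^2)$ is highly connected and that $|H| = \theta = 2^\lambda$. Given $Y \in [H]^{<\theta}$ and $\alpha < \beta$ in $H \setminus Y$: since $|Y| < \theta = \cf(\theta)$ and each $s_{\gamma\delta}$ has size $\leq \lambda < \theta$, I can choose $\gamma \in H \setminus Y$ large enough (above $\sup(Y)$, say, using regularity of $\theta$) that the supports $s_{\alpha\gamma}$ and $s_{\beta\gamma}$ avoid any ``collision'' that would force $c(\alpha,\gamma)$ or $c(\beta,\gamma)$ out of $\Lambda$; by the homogeneity arranged above, $c(\alpha,\gamma), c(\beta,\gamma) \in \Lambda$, so $\langle \alpha, \gamma, \beta\rangle$ is a path in $G \setminus Y$. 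Hence $G$ is $\theta$-connected on a vertex set of size $\theta$, i.e.\ highly connected, witnessing $2^\lambda \to_{hc} [2^\lambda]^2_{\lambda', 2}$; since $\lambda' < 2^\lambda = \theta$ was arbitrary, this completes the argument. The only remaining routine points are that $\bb{P}$ preserves cardinals and computes $2^\lambda = \theta$ correctly (standard, from $\theta$ inaccessible and $\mathrm{GCH}$-type cardinal arithmetic below $\theta$, or just from the $\lambda^+$-cc and a nice-name count), and that the ground-model $\Delta$-system/indiscernibility extraction can be carried out for a name rather than an actual coloring, which is handled by first passing to a name and using that $\bb{P}$ has size $\theta$ and the $\lambda^+$-cc.
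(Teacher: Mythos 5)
Your opening moves match the paper's: pass to a name, use the chain condition to attach small supports and deciding conditions to each pair, and use the weak compactness of $\theta$ to homogenize order types and value patterns and to run a two-dimensional $\Delta$-system argument with alignment. But the heart of your argument has a genuine gap. A support $s_{\alpha\beta}$ does not determine the color: $\bb{P}\restriction s_{\alpha\beta}$ merely contains conditions deciding $\dot{c}(\alpha,\beta)$, and different conditions decide it differently, so after your thinning all you really have is, for each pair, one chosen condition $q_{\alpha,\beta}$ forcing some fixed color $i_0$. Your final verification (``choose $\gamma$ so that $s_{\alpha\gamma}$ and $s_{\beta\gamma}$ avoid any collision that would force $c(\alpha,\gamma)$ out of $\Lambda$'') only shows that the chosen conditions $q_{\alpha,\gamma}$ and $q_{\beta,\gamma}$ are compatible, i.e.\ that it is \emph{consistent} that $c(\alpha,\gamma)=c(\beta,\gamma)=i_0$; in $V[G]$ the generic restricted to $s_{\alpha\gamma}$ will in general not extend $q_{\alpha,\gamma}$, and nothing forces the color into $\Lambda$. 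For the same reason your heuristic for where the two colors come from (the two ``sides'' of the $\Delta$-system disagreeing) is not a mechanism: the color, as a function of the generic on $s_{\alpha\beta}\setminus r$, can take many values, and alignment does not cut this down to two. No fixed ground-model $H\in[\theta]^\theta$ can serve the role you want it to; compare the remark at the end of Section \ref{forcing_sec}, where the coloring $\Delta$ shows that even small ground-model-style homogeneous sets in few colors are outright impossible for such colorings.

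The missing idea is to take the vertex set to be \emph{generic} rather than a ground-model set, and to prove high connectivity by density arguments. In the paper's proof one fixes, for each pair, a condition $q_{\alpha,\beta}$ forcing color $i_0$, extracts $\Delta$-system roots $u^+_\alpha$ (toward larger second coordinates) and $u^-_\beta$ (toward smaller first coordinates) with templates $q^+_\alpha$, $q^-_\beta$, and then runs a \emph{second} round, choosing $r_{\alpha,\beta}\leq q^-_\alpha\cup q^+_\beta$ forcing a second color $i_1$ and homogenizing again. The homogeneous object is then named generically: $\dot{X}_0=\{\alpha : q^+_\alpha\in\dot{G}\}$ and $\dot{X}_1=\{\beta : r_\beta\in\dot{G}\}$, with $\dot{X}=\dot{X}_0\cup\dot{X}_1$ and $\dot{E}=\dot{c}^{-1}(\{i_0,i_1\})\cap[\dot{X}]^2$. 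Membership in $\dot{X}_0$ or $\dot{X}_1$ is exactly what guarantees that the generic agrees with the root parts of the relevant deciding conditions, so that densely often one can find arbitrarily large $\gamma$ with $q_{\alpha,\gamma},q_{\beta,\gamma}\in\dot{G}$ (connecting two $\dot{X}_0$-points through an $\dot{X}_1$-point in color $i_0$) or with $r_{\alpha,\beta}\in\dot{G}$ (connecting an $\dot{X}_1$-point to an $\dot{X}_0$-point in color $i_1$); a further homogenization is needed to ensure $q_{\alpha,\beta}$ and $r_\beta$ are compatible. That interplay between the two kinds of generic vertices is where the ``$2$'' in the square-bracket relation genuinely comes from, and without it (and without the generic choice of vertex set) your plan cannot be completed.
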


\begin{proof}
  Elements of $\bb{P}$ are partial functions $p: \theta \rightarrow 2$ such that
  $|\dom(p)| < \lambda$, ordered by reverse inclusion.
  Note that, in $V^\bb{P}$, we have $2^\lambda = \theta$, so we must
  show that $\theta \rightarrow_{hc} [\theta]^2_{\lambda', 2}$ holds
  after forcing with $\bb{P}$ for all $\lambda' < \theta$.
  Fix a cardinal $\lambda' < \theta$, a condition $p \in \bb{P}$
  and a $\bb{P}$-name $\dot{c}$ that is
  forced by $p$ to be a name for a function from $[\theta]^2$ to
  $\lambda'$. We will find a condition $r \leq p$ and colors
  $i_0, i_1 < \lambda'$ such that $r$ forces the existence of
  a highly connected subgraph $(\dot{X}, \dot{E})$ of $(\theta, [\theta]^2)$
  such that $\dot{c}``\dot{E} = \{i_0, i_1\}$.

  For all $\alpha < \beta < \theta$, fix a condition $q_{\alpha, \beta} \leq p$
  and a color $i_{\alpha, \beta} < \lambda'$ such that
  \[
    q_{\alpha, \beta} \Vdash ``\dot{c}(\alpha, \beta) = i_{\alpha, \beta}".
  \]
  Without loss of generality, let us assume that $\{\alpha, \beta\} \subseteq
  \dom(q_{\alpha, \beta})$. We will take advantage of the weak compactness of
  $\theta$ to find an unbounded $A \subseteq \theta$ such that the conditions
  $\{q_{\alpha, \beta} \mid (\alpha, \beta) \in [A]^2\}$ enjoy a certain
  uniformity. For notational convenience, for all $(\alpha, \beta) \in [A]^2$,
  let $u_{\alpha, \beta} = \dom(q_{\alpha, \beta})$.

  First, we can appeal to the weak compactness of $\theta$ to
  find ordinals $i_0 < \lambda'$ and $\xi_0 < \lambda$, a function
  $d_0 : \xi_0 \rightarrow 2$, and an unbounded $A_0 \subseteq
  \theta$ such that, for all $(\alpha, \beta) \in [A_0]^2$, we have
  \begin{enumerate}
    \item $i_{\alpha, \beta} = i_0$;
    \item $\otp(u_{\alpha, \beta}) = \xi_0$;
    \item letting $u_{\alpha, \beta}$ be enumerated in increasing
    order as $\{\gamma_\zeta \mid \zeta < \xi_0\}$, we have that
    $q_{\alpha, \beta}(\gamma_\zeta) = d_0(\zeta)$ for all $\zeta < \xi_0$.
  \end{enumerate}

  We now appeal to \cite[Lemma 18]{highly_connected} (see also
  \cite{todorcevic_reals_and_positive_partition_relations}), which can be seen as a
  two-dimensional $\Delta$-system lemma, to find an unbounded $A_1 \subseteq A_0$
  such that
  \begin{enumerate}
    \setcounter{enumi}{3}
    \item for all $\alpha \in A_1$, the set $\{u_{\alpha, \beta} \mid
    \beta \in A_1 \setminus (\alpha + 1)\}$ is a $\Delta$-system, with
    root $u_\alpha^+$;
    \item letting $A_1^* = A_1 \setminus \{\min(A_1)\}$, for all
    $\beta \in A_1^*$, the set
    $\{u_{\alpha, \beta} \mid \alpha \in A_1 \cap \beta\}$ is a
    $\Delta$-system, with root $u_\beta^-$;
    \item the sets $\{u_\alpha^+ \mid \alpha \in A_1\}$ and
    $\{u_\alpha^- \mid \alpha \in A_1^*\}$ are both $\Delta$-systems, with
    roots $u^+_\emptyset$ and $u^-_\emptyset$, respectively.
  \end{enumerate}
  It is in fact easy to see, given the above discussion, that the roots
  $u^+_\emptyset$ and $u^-_\emptyset$ of item (6) must both be equal
  to the set
  \[
    u_\emptyset := \bigcap_{(\alpha, \beta) \in [A_1]^2} u_{\alpha, \beta}.
  \]
  By thinning out $A_1$ further, using the weak compactness of $\theta$, we can
  assume that:
  \begin{enumerate}
    \setcounter{enumi}{6}
    \item For all $(\alpha, \beta) \in [A_1^*]^2$, we have $\otp(u_{\alpha}^+)
    = \otp(u_{\beta}^+)$ and $\otp(u_\alpha^-) = \otp(u_\beta^-)$.
    \item For all $(\alpha, \beta), (\gamma, \delta) \in [A_1^*]^2$, we have that
    $u_\alpha^+$, $u_\beta^-$, and $u_\emptyset$ ``sit inside" of $u_{\alpha, \beta}$
    in the same way that $u_\gamma^+$, $u_\delta^-$, and $u_\emptyset$ ``sit inside"
    of $u_{\gamma, \delta}$. More formally, we have
    \begin{enumerate}
      \item $\{\zeta < \xi_0 \mid u_{\alpha, \beta}(\zeta) \in u_\alpha^+\} =
      \{\zeta < \xi_0 \mid u_{\gamma, \delta}(\zeta) \in u_\gamma^+\}$;
      \item $\{\zeta < \xi_0 \mid u_{\alpha, \beta}(\zeta) \in u_\beta^-\} =
      \{\zeta < \xi_0 \mid u_{\gamma, \delta}(\zeta) \in u_\delta^-\}$;
      \item $\{\zeta < \xi_0 \mid u_{\alpha, \beta}(\zeta) \in u_\emptyset\} =
      \{\zeta < \xi_0 \mid u_{\gamma, \delta}(\zeta) \in u_\emptyset\}$.
    \end{enumerate}
  \end{enumerate}

  By items (3) and (8) above, it follows that, if $\alpha < \beta < \gamma$
  are all elements of $A_1^*$, then
  \begin{itemize}
    \item $q_{\alpha, \beta}$ and $q_{\alpha, \gamma}$ are compatible in $\bb{P}$;
    \item $q_{\alpha, \gamma}$ and $q_{\beta, \gamma}$ are compatible in $\bb{P}$.
  \end{itemize}

  It also follows that, for all $\alpha \in A^*_1$, we can define a condition
  $q^+_\alpha$ by letting $q^+_\alpha = q_{\alpha, \beta}
  \restriction u^+_\alpha$ for some $\beta \in A_1 \setminus (\alpha + 1)$, and
  that this definition is independent of our choice of $\beta$. Similarly,
  for all $\beta \in A_1^*$, we can define $q^-_\beta$ by letting
  $q^-_\beta = q_{\alpha, \beta} \restriction u^-_\beta$ for some
  $\alpha \in A_1 \cap \beta$, and we can define $q_\emptyset$ by
  letting $q_\emptyset = q_{\alpha, \beta} \restriction u_\emptyset$ for
  some $(\alpha, \beta) \in [A_1]^2$. Again, these definitions are
  independent of our choices.

  \begin{claim}
    There is an unbounded $A_1^{**} \subseteq A_1^*$ such that, for all
    $(\alpha, \beta) \in [A_1^{**}]^2$, we have $u_\alpha^- \cap u_\beta^+ =
    u_\emptyset$.
  \end{claim}

  \begin{proof}
    It is clear from the definition of $u_\emptyset$ that $u_\alpha^- \cap
    u_\beta^+ \supseteq u_\emptyset$ for all $(\alpha, \beta) \in [A_1^*]^2$.
    Define a function $f:[A_1^*]^2 \rightarrow 2$ by letting
    $f(\alpha, \beta) = 0$ if $u_\alpha^- \cap u_\beta^+ = u_\emptyset$ and
    letting $f(\alpha, \beta) = 1$ otherwise. By the weak compactness of
    $\theta$, we can find an unbounded set $A_1^{**} \subseteq A_1^*$ such that
    $f$ is constant on $[A_1^{**}]^2$. We claim that $f``[A_1^{**}]^2 = \{0\}$.
    It suffices to find a single pair $(\alpha, \beta) \in [A_1^{**}]^2$ such
    that $f(\alpha, \beta) = 0$. Let $\alpha = \min(A_1^{**})$. We know that
    $\{u_{\beta}^+ \mid \beta \in A_1^{**}\}$ forms a $\Delta$-system with
    root $u_\emptyset$. Since $|u_{\alpha}^-| < \lambda < \theta$, we can
    find $\beta \in A_1^{**} \setminus (\alpha + 1)$ such that
    $u_{\beta}^+ \setminus u_\emptyset$ is disjoint from $u_{\alpha}^-$. But
    then $u_\alpha^- \cap u_\beta^+ = u_\emptyset$, so $f(\alpha, \beta) = 0$.
    It follows that $A_1^{**}$ is as desired.
  \end{proof}

  Fix an unbounded $A_1^{**} \subseteq A_1^*$ as given by the previous claim.
  It follows that, for all $(\alpha, \beta) \in
  [A_1^{**}]^2$, $q_\alpha^- \cup q_\beta^+$
  is a condition in $\bb{P}$ extending both $q_\alpha^-$ and $q_\beta^+$.
  For $(\alpha, \beta) \in [A_1^{**}]^2$, fix a condition $r_{\alpha, \beta}
  \leq q_\alpha^- \cup q_\beta^+$ and a color $j_{\alpha, \beta} < \lambda'$
  such that
  \[
    r_{\alpha, \beta} \Vdash ``\dot{c}(\alpha, \beta) = j_{\alpha, \beta}".
  \]
  Let $v_{\alpha, \beta} = \dom(r_{\alpha, \beta})$. Repeating the above process
  with $\langle r_{\alpha, \beta} \mid (\alpha, \beta) \in [A_1^{**}]^2 \rangle$
  in place of $\langle q_{\alpha, \beta} \mid (\alpha, \beta) \in [\theta]^2
  \rangle$, we can find ordinals $i_1 < \lambda'$ and $\xi_1 < \lambda$, a
  function $d_1:\xi_1 \rightarrow 2$, and an unbounded $A_2 \subseteq A_1^{**}$ such that
  \begin{enumerate}
    \setcounter{enumi}{8}
    \item for all $(\alpha, \beta) \in [A_2]^2$, we have
    \begin{enumerate}
      \item $j_{\alpha, \beta} = i_1$;
      \item $\otp(v_{\alpha, \beta}) = \xi_1$;
      \item letting $v_{\alpha, \beta}$ be enumerated in increasing order as
      $\{\gamma_\zeta \mid \zeta < \xi_1\}$, we have that $r_{\alpha, \beta}
      (\gamma_\zeta) = d_1(\zeta)$ for all $\zeta < \xi_1$;
    \end{enumerate}
    \item for all $\alpha \in A_2$, the set $\{v_{\alpha, \beta} \mid
    \beta \in A_2 \setminus (\alpha + 1)\}$ is a $\Delta$-system, with root
    $v_\alpha$;
    \item the set $\{v_\alpha \mid \alpha \in A_2\}$ is a $\Delta$-system, with
    root $v_\emptyset$;
    \item for all $(\alpha, \beta)$, $(\gamma, \delta) \in [A_2]^2$, the sets
    $v_\emptyset$ and $v_\alpha$ ``sit inside" $v_{\alpha, \beta}$ in the same
    way that the sets $v_\emptyset$ and $v_\gamma$ ``sit inside" $v_{\gamma, \delta}$.
  \end{enumerate}
  As above, we define conditions $\langle r_\alpha \mid \alpha \in A_2 \rangle$
  by letting $r_\alpha = r_{\alpha, \beta} \restriction v_\alpha$ for some
  $\beta \in A_2 \setminus (\alpha + 1)$, and we define $r_\emptyset =
  r_{\alpha, \beta} \restriction v_\emptyset$ for some $(\alpha, \beta)
  \in [A_2]^2$. Again, these definitions are independent of our choices.

  We thin out our set $A_2$ one final time in the following way. Define a
  function $e:[A_2]^2 \rightarrow 2$ by letting $e(\alpha, \beta) = 0$ if
  $q_{\alpha, \beta}$ and $r_\beta$ are compatible in $\bb{P}$, and letting
  $e(\alpha, \beta) = 1$ otherwise. Using the weak compactness of $\theta$,
  find an unbounded $A \subseteq A_2$ such that $e$ is constant on $[A]^2$.

  \begin{claim} \label{e_homogeneous_claim}
    $e``[A]^2 = \{0\}$.
  \end{claim}

  \begin{proof}
    Since $e$ is constant on $[A]^2$, it suffices to find a single pair
    $(\alpha, \beta) \in [A]^2$ such that $e(\alpha, \beta) = 0$.

    Fix $\beta \in A$ such that $|A \cap \beta| \geq \lambda$.
    The set $\{u_{\alpha, \beta} \mid \alpha \in A \cap \beta\}$ forms a
    $\Delta$-system with root $u_\beta^-$. Moreover, by construction, we have
    $v_\beta \supseteq u_\beta^-$ and $r_\beta \leq q_\beta^-$. We can
    therefore find $\alpha \in A \cap \beta$ such that $(u_{\alpha, \beta}
    \setminus u_\beta^-) \cap v_\beta = \emptyset$. Then we have
    $q_{\alpha, \beta} \restriction u_\beta^- = q_\beta^- \geq r_\beta$, so
    $q_{\alpha, \beta}$ and $r_\beta$ are compatible in $\bb{P}$ and hence
    $e(\alpha, \beta) = 0$.
  \end{proof}

  Now make the following assignments:
  \begin{itemize}
    \item $r = r_\emptyset$;
    \item $\dot{G}$ is the canonical $\bb{P}$-name for the $\bb{P}$-generic filter;
    \item $\dot{X}_0$ is a $\bb{P}$-name for the set $\{\alpha \in A \mid
    q_\alpha^+ \in \dot{G}\}$;
    \item $\dot{X}_1$ is a $\bb{P}$-name for the set $\{\beta \in A \mid
    r_\beta \in \dot{G}\}$;
    \item $\dot{X}$ is a $\bb{P}$-name for $\dot{X}_0 \cup \dot{X}_1$;
    \item $\dot{E}$ is a $\bb{P}$-name for $\dot{c}^{-1}(\{i_0, i_1\}) \cap
    [\dot{X}]^2$.
  \end{itemize}
  Notice that each $q_{\alpha, \beta} \leq p$ so also $q_\emptyset \leq p$. It
  similarly follows that $r \leq p$. We will end the proof by
  showing that $r$ forces $(\dot{X}, \dot{E})$ to be a highly connected
  graph of cardinality $\theta$.

\begin{claim} \label{large_x_claim}
  $r \Vdash ``|\dot{X}| = \theta"$.
\end{claim}

\begin{proof}
  We will show that $r \Vdash ``|\dot{X}_0| = \theta"$, which suffices. A similar proof will
  in fact show that $r \Vdash ``|\dot{X}_1| = \theta"$, as well.

  Fix an arbitrary condition $s \leq r$ and an $\eta < \theta$. It suffices to
  find $\alpha \in A \setminus \eta$ such that
  $q_\alpha^+$ and $s$ are compatible in $\bb{P}$. Since $s \leq r$, we have
  $\dom(s) \supseteq u_\emptyset$, and $s \leq q_\emptyset$. Recall that the
  set $\{u^+_\alpha \mid \alpha \in A \setminus \eta\}$ is a $\Delta$-system
  with root $u_\emptyset$. We can therefore find $\alpha \in A$ such that
  $(u_\alpha^+ \setminus u_\emptyset) \cap \dom(s) = \emptyset$. Since
  $q_\alpha^+ \restriction u_\emptyset = q_\emptyset \geq s$, it follows that
  $q_\alpha^+$ and $s$ are compatible in $\bb{P}$, as desired.
\end{proof}

To show that $r$ forces $(\dot{X}, \dot{E})$ to be highly connected, we first
establish a couple of preliminary claims.

\begin{claim} \label{x_0_extension_claim}
  $r \Vdash ``\forall (\alpha, \beta) \in [\dot{X}_0]^2 ~ \forall \eta < \theta
  ~ \exists \gamma \in
  \dot{X}_1 \setminus \eta ~ [\{q_{\alpha, \gamma}, q_{\beta, \gamma}\}
  \subseteq \dot{G}]"$.
\end{claim}

\begin{proof}
  Fix an ordinal $\eta < \theta$, a condition $s \leq r$
  and $(\alpha, \beta) \in [A]^2$ such that $s$ forces both
  $\alpha$ and $\beta$ to be in $\dot{X}_0$. Without loss of generality, we can
  assume that $s \leq q_\alpha^+$ and $s \leq q_\beta^+$ and that $\eta > \beta$.
  It will suffice
  to find $\gamma \in A \setminus \eta$ such that the conditions
  $s$, $q_{\alpha, \gamma}$, $q_{\beta, \gamma}$, and $r_\gamma$ are all
  pairwise compatible, since then the union of these four conditions would
  itself be a condition extending $s$ and forcing $\gamma$ to be as desired.
  Note also that, for all $\gamma \in A \setminus \eta$, we know
  that $q_{\alpha, \gamma}$ and $q_{\beta, \gamma}$ are compatible by the discussion
  following item (8) above.
  We also know that $r_\gamma$ is compatible with each of $q_{\alpha, \gamma}$
  and $q_{\beta, \gamma}$ by Claim \ref{e_homogeneous_claim}. It therefore
  suffices to find $\gamma \in A \setminus \eta$ such that $s$
  is compatible with each of $q_{\alpha, \gamma}$, $q_{\beta, \gamma}$, and
  $r_\gamma$.

  By assumption, we know that $\dom(s) \supseteq u_\alpha^+ \cup u_\beta^+ \cup v_\emptyset$ and
  that $s$ extends each of $q_\alpha^+$, $q_\beta^+$, and $r_\emptyset$.
  Recall also that the sets $\{u_{\alpha, \gamma} \mid \gamma \in A \setminus
  \eta\}$, $\{u_{\beta, \gamma} \mid \gamma \in A \setminus \eta\}$,
  and $\{v_\gamma \mid \gamma \in A \setminus \eta \}$ are $\Delta$-systems
  with roots $u_\alpha^+$, $u_\beta^+$, and $v_\emptyset$, respectively.
  We can therefore find $\gamma \in A \setminus \eta$ such that each of the sets
  $(u_{\alpha, \gamma} \setminus u_\alpha^+)$, $(u_{\beta, \gamma} \setminus
  u_\beta^+)$, and $v_\gamma \setminus v_\emptyset$ is disjoint from $\dom(s)$.
  But then we have
  \begin{itemize}
    \item $q_{\alpha, \gamma} \restriction \dom(s) = q_\alpha^+$;
    \item $q_{\beta, \gamma} \restriction \dom(s) = q_\beta^+$; and
    \item $r_\gamma \restriction \dom(s) = r_\emptyset$.
  \end{itemize}
  Therefore, since $s$ extends each of $q_\alpha^+$, $q_\beta^+$, and
  $r_\emptyset$, it follows that $s$ is compatible with each of
  $q_{\alpha, \gamma}$, $q_{\beta, \gamma}$, and $r_\gamma$, as desired.
\end{proof}

\begin{claim} \label{x_1_extension_claim}
  $r \Vdash ``\forall \alpha \in \dot{X}_1 ~ \forall \eta < \lambda
  ~ \exists \beta \in \dot{X}_0
  \setminus \eta ~ [r_{\alpha, \beta} \in \dot{G}]"$.
\end{claim}

\begin{proof}
  Fix an ordinal $\eta < \theta$, a condition $s \leq r$ and
  $\alpha \in A$ such that $s \Vdash ``\alpha \in \dot{X}_1"$.
  Without loss of generality, assume that $\eta > \alpha$ and $s \leq r_\alpha$,
  and hence $\dom(s) \supseteq v_\alpha$. The set $\{v_{\alpha, \beta} \mid
  \beta \in A \setminus \eta \}$ is a $\Delta$-system with root $v_\alpha$;
  we can therefore find $\beta \in A \setminus \eta$ such that
  $(v_{\alpha, \beta} \setminus v_\alpha) \cap \dom(s) = \emptyset$. We know that
  $r_{\alpha, \beta} \restriction \dom(s) = r_\alpha \geq s$, so it follows that
  $s$ and $r_{\alpha, \beta}$ are compatible. Recall that $r_{\alpha, \beta}
  \leq q_\beta^+$ and therefore forces $\beta$ to be in $\dot{X}_0$. Therefore,
  the union of $s$ and $r_{\alpha, \beta}$ forces $\beta$ to be as desired.
\end{proof}

Let $G$ be a $\bb{P}$-generic filter over $V$ with $r \in G$. Let $c$, $X_0$, $X_1$,
$X$, and $E$ be the realizations of $\dot{c}$, $\dot{X}_0$, $\dot{X}_1$,
$\dot{X}$, and $\dot{E}$, respectively, in
$V[G]$. By the definition of $\dot{E}$, we know that $c``[E]^2 \subseteq
\{i_0, i_1\}$. By Claim \ref{large_x_claim}, we know that $|X| = \theta$. It
thus remains to show that, for all $Y \in [X]^{<\theta}$, the graph
$(X \setminus Y, ~ E \cap [X \setminus Y]^2)$ is connected.

Fix $Y \in [X]^{<\theta}$, and let $Z = X \setminus Y$. Also fix
$(\alpha, \beta) \in [Z]^2$. Since $\theta$ is regular and $|Y| < |X| = \theta$,
there is $\eta < \theta$ such that $Y \subseteq \eta$, and hence
$X \setminus \eta \subseteq Z$. By increasing $\eta$ if necessarily, we may
assume that $\beta < \eta$. There are now a number of cases, not
necessarily mutually exclusive, to consider.

\textbf{Case 1: $\alpha, \beta \in X_0$.} In this case, Claim \ref{x_0_extension_claim}
implies that there is $\gamma \in X_1 \setminus \eta$ such that $\{q_{\alpha,
\gamma}, q_{\beta, \gamma}\} \subseteq G$. It follows that $c(\alpha, \gamma)
= c(\alpha, \gamma) = i_0$, so $\langle \alpha, \gamma, \beta \rangle$ is a
path from $\alpha$ to $\beta$ in $(Z, E \cap [Z]^2)$.

\textbf{Case 2: $\alpha \in X_0$ and $\beta \in X_1$.} By Claim
\ref{x_1_extension_claim}, we can find $\gamma \in X_0 \setminus \eta$
such that $r_{\beta, \gamma} \in G$. Then, by Claim \ref{x_0_extension_claim},
we can find $\delta \in X_1 \setminus (\gamma + 1)$ such that
$\{q_{\alpha, \delta}, q_{\gamma, \delta}\} \subseteq G$. It follows
that $c(\alpha, \delta) = c(\gamma, \delta) = i_0$ and
$c(\beta, \gamma) = i_1$, so $\langle \alpha, \delta, \gamma, \beta \rangle$
is a path from $\alpha$ to $\beta$ in $(Z, E \cap [Z]^2)$.

\textbf{Case 3: $\alpha \in X_1$ and $\beta \in X_0$.} This is symmetric to
Case 2.

\textbf{Case 4: $\alpha, \beta \in X_1$.} By Claim \ref{x_1_extension_claim},
we can first find $\gamma \in X_0 \setminus \eta$ such that $r_{\alpha, \gamma}
\in G$ and then $\delta \in X_0 \setminus (\gamma + 1)$ such that
$r_{\beta, \delta} \in G$. Then, by Claim \ref{x_0_extension_claim},
we can find $\epsilon \in X_1 \setminus (\delta + 1)$ such that
$\{q_{\gamma, \epsilon}, q_{\delta, \epsilon}\} \subseteq G$. It follows
that $c(\alpha, \gamma) = c(\beta, \delta) = i_1$ and $c(\gamma, \epsilon) =
c(\delta, \epsilon) = i_0$, so $\langle \alpha, \gamma, \epsilon, \delta,
\beta \rangle$ is a path from $\alpha$ to $\beta$ in $(Z, E \cap [Z]^2)$.

This exhausts all possible cases, so we have shown that, in $V[G]$,
$(X, E)$ is highly connected, thus finishing the proof.
\end{proof}

\begin{remark}
  We have seen that Theorem \ref{positive_square_bracket_relation} is sharp in
  the sense that the conclusion cannot be improved to $2^\lambda \rightarrow_{hc}
  (2^\lambda)^2_\lambda$. It is also sharp in the sense that the ``$hc$" subscript
  cannot be dropped. For example, it is easily seen that the coloring
  $\Delta : [{^\lambda} 2]^2 \rightarrow \lambda$ defined by letting
  $\Delta(f,g)$ be the least $i < \lambda$ such that $f(i) \neq g(i)$ witnesses
  the negative square bracket relation
  \[
    2^\lambda \not\rightarrow [\aleph_0]^2_{\lambda, {<}\aleph_0},
  \]
  and, more generally, if $\nu \leq \lambda$, $\mu \leq 2^\lambda$, and
  $2^\chi < \mu$ for all $\chi < \nu$, then $\Delta$ witnesses
  \[
    2^\lambda \not\rightarrow [\mu]^2_{\lambda, {<}\nu}.
  \]
\end{remark}

We end this section by noting the following equiconsistency that results from a 
special case of the preceding theorem.

\begin{corollary}
  The following statements are equiconsistent over ZFC.
  \begin{enumerate}
    \item There exists a weakly compact cardinal.
    \item $2^{\aleph_0} \rightarrow_{hc} [2^{\aleph_0}]^2_{\aleph_0, 2}$.
  \end{enumerate}
\end{corollary}

\begin{proof}
  The implication from the consistency of (1) to that of (2) follows directly from 
  Theorem \ref{positive_square_bracket_relation}. For the reverse implication, if 
  $\mu := \cf(2^{\aleph_0})$ is not weakly compact in L, then $\square(\mu)$ holds, 
  and hence we can fix a coloring $c:[\mu]^2 \rightarrow \omega$ witnessing 
  $\mu \not\rightarrow_{wc} [\mu]^2_{\aleph_0, {<}\aleph_0}$. Now let 
  $\langle \nu_\eta \mid \eta < \mu \rangle$ be an increasing sequence of ordinals, 
  cofinal in $2^{\aleph_0}$. For each $\alpha < 2^{\aleph_0}$, let 
  $\eta_\alpha$ be the least $\eta < \mu$ such that $\alpha \leq \nu_\eta$,
  and define a coloring $d:[2^{\aleph_0}]^2 \rightarrow \omega$ by setting 
  \[
    d(\alpha, \beta) = \begin{cases}
      c(\eta_\alpha, \eta_\beta) & \text{if } \eta_\alpha \neq \eta_\beta \\
      0 & \text{if } \eta_\alpha = \eta_\beta
    \end{cases}.
  \]
  Now if $X \in [2^{\aleph_0}]^{2^{\aleph_0}}$, $\Lambda \subseteq \omega$, 
  and $X$ is well-connected in $\Lambda$ with respect to $d$, then 
  $Y := \{\eta_\alpha \mid \alpha \in X\} \in [\mu]^\mu$ is well-connected in 
  $\Lambda$ with respect to $c$. Therefore, $d$ witnesses $2^{\aleph_0} 
  \not\rightarrow_{wc} [2^{\aleph_0}]^2_{\aleph_0, {<}\aleph_0}$ and hence, 
  \emph{a fortiori}, $2^{\aleph_0} \not\rightarrow_{hc} [2^{\aleph_0}]^2_{\aleph_0, 2}$.
\end{proof}

\section{Recent work} \label{questions_sec}

In the first draft of this paper, we included here the following two questions, which 
were open at the time, about whether certain positive instances of the highly connected 
partition relation are consistent (relative to the consistency of the existence of large 
cardinals). These questions were originally asked in \cite{highly_connected} and 
\cite{well_connected}, respectively.

\begin{question}[Bergfalk-Hru\v{s}\'{a}k-Shelah \cite{highly_connected}]
  Is $\aleph_2 \rightarrow_{hc} (\aleph_2)^2_{\aleph_0}$ consistent?
\end{question}

\begin{question}[Bergfalk \cite{well_connected}]
  Is $\aleph_{\omega + 1} \rightarrow_{hc} (\aleph_\omega)^2_{\aleph_0}$
  consistent? If so, what about $\aleph_{\omega + 1} \rightarrow_{hc}
  (\aleph_{\omega + 1})^2_{\aleph_0}$?
\end{question}

Both of these questions have positive answers. In a forthcoming work \cite{hsz}, 
Hru\v{s}\'{a}k, Shelah, and Zhang prove that, if $\kappa$ is measurable then, in the 
forcing extension by the L\'{e}vy collapse $\mathrm{Coll}(\omega_1, {<}\kappa)$, 
$\aleph_2 \rightarrow_{hc} (\aleph_2)^2_{\aleph_0}$ holds. They also prove that if, 
moreover, $\kappa$ is 
$\kappa^{+\omega+1}$-supercompact, then $\aleph_{\omega + 1} \rightarrow_{hc}
  (\aleph_{\omega + 1})^2_{\aleph_0}$ also holds in the extension by 
  $\mathrm{Coll}(\omega_1, {<}\kappa)$.

A more open-ended, speculative question involves generalizations of the
partition relations being studied to higher dimensions. In the case of
the classical partition relation $\nu \rightarrow (\mu)^2_\lambda$, it is
clear how to generalize to $\nu \rightarrow (\mu)^k_\lambda$ for
$k > 2$. In the case of $\nu \rightarrow_{hc} (\mu)^2_\lambda$ or
$\nu \rightarrow_{wc} (\mu)^2_\lambda$, however, such a generalization would
require isolating the correct definition(s) of ``highly connected" and
``well-connected" in the context of $k$-uniform hypergraphs. There are a number
of different approaches one might take to this generalization, but it is
presently unclear, at least to us, which, if any, of these approaches yields
an interesting theory of higher-dimensional partition relations. We therefore
ask the following deliberately vague problem.

\begin{problem} \label{problem_53}
  Isolate the correct definition(s) for a generalization (or generalizations)
  of highly connected or well-connected Ramsey theory to higher dimensions.
\end{problem}

In connection with Problem \ref{problem_53}, we want to highlight recent work of Bannister, 
Bergfalk, Moore, and Todorcevic \cite{bbmt} in which they introduce a partition 
hypothesis $\mathrm{PH}_n(\mathbb{P}, \lambda)$ for a given $0 < n < \omega$, a directed 
quasi-order $\mathbb{P}$, and a cardinal $\lambda$. $\mathrm{PH}_n(\mathbb{P}, \lambda)$ is 
a positive $(n+1)$-dimensional partition relation yielding information about colorings 
of the form $c:[\mathbb{P}]^{n+1} \rightarrow \lambda$. For cardinals $\nu$ and $\lambda$, the 
principle $\mathrm{PH}_1(\nu, \lambda)$ is related to the relations $\nu \rightarrow_{\mathrm{hc}} 
(\nu)^2_\lambda$ and $\nu \rightarrow_{\mathrm{wc}} (\nu)^2_\lambda$, and directly implies the 
latter, so this work can be seen in part as one possible approach to Problem \ref{problem_53}.
We feel that there remains much fruitful work to be done in further study of these and related 
partition principles.

\bibliographystyle{plain}
\bibliography{bib}

\end{document}